\newcommand{\BO}{\mathsf{BO}}
\newcommand{\rk}{\mathsf{r}}
\newcommand{\ac}{\mathcal{A}}
\newcommand{\Z}{\mathbb{Z}}
\newcommand{\N}{\mathbb{N}}
\newcommand{\Zp}{\mathbb{Z}/p\mathbb{Z}}
\newcommand{\Zn}{\mathbb{Z}/n\mathbb{Z}}
\newcommand{\binomial}[2]{{#1 \choose #2}}
\newtheorem{theorem}{Theorem}[section]
\newtheorem{lemma}[theorem]{Lemma}
\newtheorem{corollary}[theorem]{Corollary}
\newtheorem{proposition}[theorem]{Proposition}
\newtheorem{theirtheorem}{Theorem}
\begin{document}

\title[Barycentric-sum problems over cyclic groups]{Some remarks\\
on barycentric-sum problems over cyclic groups}
\author{O.~Ordaz \and A.~Plagne \and W.~A.~Schmid}

\address{Escuela de Matem\' aticas y Laboratorio MoST, Centro ISYS, Facultad de Ciencias,
Universidad Central de Venezuela, Ap. 47567, Caracas 1041--A, Venezuela}
\address{CMLS, \'Ecole polytechnique, 91128 Palaiseau Cedex, France}
\address{Universit\'e Paris 13, Sorbonne Paris Cit\'e, LAGA, CNRS, UMR 7539, Universit\'e Paris 8, F-93430, Villetaneuse, France}
\address{}

\email{oscarordaz55@gmail.com}
\email{plagne@math.polytechnique.fr}
\email{schmid@math.univ-paris13.fr}

\thanks{The research of O. Ordaz is supported by Banco Central de Venezuela and Postgrado de la Facultad de Ciencias de la U.C.V.; the one of A. Plagne by the PHC Amadeus 2012: projet n\textsuperscript{o}~27155TH;
the one of W. A. Schmid by the Austrian Science Fund (FWF): J 2907-N18, and the PHC Amadeus 2012: projet n\textsuperscript{o}~27155TH}

\dedicatory{To the memory of Yahya Ould Hamidoune}.

\begin{abstract}
We derive some new results on the $k$-th barycentric Olson constants of abelian groups (mainly cyclic). This quantity, for
a finite abelian (additive) group $(G,+)$, is defined as the smallest integer $\ell$ such that each subset $A$ of
$G$ with at least $\ell$ elements contains a subset with $k$ elements $\{g_1, \dots, g_k\}$ satisfying
$g_1 + \cdots + g_k = k\ g_j$ for some $1 \leq j \leq k$.
\end{abstract}

\maketitle

\section{Introduction}

It is a classical problem of Additive Combinatorics to find size conditions on subsets of
(or sequences over) additive abelian (semi-)groups which guarantee that the subsets
contain a certain `arithmetic pattern';
two classical `patterns' are arithmetic progressions (of a certain fixed length) and
zero-sum sets or sequences (of a certain cardinality or length).
Since we only consider finite structures, our size condition is always the cardinality
of the set or the length of the sequence.

Motivated among others by results of Y. Ould~Hamidoune, such as \cite{hamdisc}, systematic investigations related to the following `pattern' have
been started in \cite{MT,chabela} and continued for instance in \cite{luong, oscar}; for a survey see \cite{domingo1}.
A very closely related problem also appears in some work of Alon \cite{alon}, and special cases are classical (cf.~below).

We fix a positive integer $k$; to exclude trivial corner cases one often imposes $k \ge 3$.
A set $\{g_1, \dots, g_k\}$, assuming the $g_i$'s are distinct, or a sequence $(g_1, \dots, g_k)$
in a finite abelian (additive) group $(G,+)$ is called \emph{barycentric} if
\[
\sum_{i=1}^k g_i = k\ g_j
\]
for some $1\le j \le k$; this equality is also used in the sometimes more convenient form
\[
\sum_{i=1,\ i \neq j}^k g_i = (k-1)\ g_j.
\]
For brevity, we refer to barycentric sets (resp. sequences) of size $k$ as {\em $k$-barycentric}.

In the present paper, we obtain several results, mainly for $G$ prime cyclic,
related to the problem of determining, for a given value of $k$, a minimal size
above which any set (resp. sequence) in $G$ has to contain a $k$-barycentric subset (resp. subsequence).

We point out that a set is $3$-barycentric if and only if its elements form an arithmetic progression;
for sequences, this is also true
(if one does not insist that the elements of an arithmetic progression are distinct,
in other words if one allows the difference to be an element of order strictly less than the number of terms).
And, for $n$ being (a multiple of) the exponent of the group, being $n$-barycentric
is equivalent to being a zero-sum of size $n$.
So, the above questions contain the problem of guaranteeing the existence
of  $3$-term arithmetic progressions --- its finite abelian group version --- as well as the Erd{\H o}s--Ginzburg--Ziv problem and some of its variants \cite{gaogeroldingersurvey}.
Thus, hoping for explicit and complete answers seems by far too optimistic.
Indeed, the above questions, for values of $k$ between $3$ and the exponent of $G$,
can be seen as an interpolation between these two classical problems.
An aim of this paper is to highlight a certain `phase-transition' that the problem undergoes when, for some fixed cyclic group $G$, the parameter $k$ is varied.
In this article, we focus on the version of the question for sets.

The constants we study are the so-called, for $k$ a positive integer,
$k$-th \emph{barycentric Olson constants} of a finite abelian group $G$, denoted by  $\BO(k, G)$ and
defined as the smallest integer $\ell$ such that each set $A$ with at least $\ell$ elements
contains a $k$-barycentric subset. We note that this is always well-defined as for $\ell> |G|$ the condition becomes vacuously true; in other words we have $\BO(k, G)\le |G|+1$. Another possibility is to only define the constant if a $k$-barycentric set exists at all; indeed, that convention is used in various earlier work on the problem. Yet, by analogy with work on other constants, see e.g. \cite{gaogeroldingersurvey}, we use the former convention.

The name is derived from that of the Olson constant, the present constant is the barycentric analog of that zero-sum constant (see, e.g., \cite{gaogeroldingersurvey}).

\section{Notation and preliminaries}

In this article, the main structures we study are cyclic (and sometimes other finite) abelian groups, denoted additively. In some of our arguments it will be convenient to consider $\Zn$, the integers modulo $n$ instead of an abstract cyclic group of order $n$, in order to have a natural notion of multiplication available as well. Typically, we do not make any notational distinction between integers and their residue classes; in the present work there seems little risk for confusion so that this notational simplification seems justified.

Let $(G,+)$ be an abelian group.
The (Minkowski) sumset of two subsets $A$ and $B$ of $G$
will be denoted by
\[
A+B = \{ a+b \colon a \in A, b\in B    \} .
\]
We denote the sum of the elements of a subset $S$ of $G$ by $\sigma (S)$.
Furthermore, for $k$ an integer, let
\[
\Sigma_k (A) =\{ \sigma (S) \colon S \subset A,\, |S|=k \}.
\]
Finally, for $t$ an integer, we denote by $t \cdot A$  the set of multiples $\{ ta \colon a \in A  \}$.

Recall that each finite abelian group $G$ is isomorphic to a product
\[
\Z  /n_1 \Z \times \Z  /n_2 \Z  \times \cdots \times  \Z  /n_r \Z,
\]
with integers $1< n_1 | n_2 | \cdots | n_r$ that are uniquely determined. The integer $r$ is called the rank of $G$, denoted $\rk(G)$. For a prime
$p$, the $p$-rank of $G$, denoted $\rk_p (G)$, is defined as the number of $i$ such that  $n_i$ is divisible by $p$. Moreover, $n_r$ is called the exponent of $G$, denoted $\exp(G)$.

Recall that the multiplicative order of $s$, an integer coprime to $n$, in $\Zn$ is the smallest positive integer $t$
such that $s^t \equiv \; 1 \pmod{n}$.

\section{Some general results}

While our main object of study is $\BO(k, G)$ for $G$ a cyclic group (of prime order),
we start by summarizing and establishing some results valid for general finite abelian groups. In particular we treat various corner cases; to be able to exclude them is convenient in our consideration for cyclic groups as well.
In special cases, mainly cyclic groups, some of them were known, and others are more or less direct and included for definiteness only, yet in particular the case of $k=|G|-2$, in general, is new and seems to be of some interest in its own right.

Let $(G,+)$ be a finite abelian group.
The first observations are direct consequences of the definition.
For every $k$ we have that $\BO(k,G) \le |G|+1$, as the conditions becomes trivially true; for $k >|G|$ we of course have $\BO(k,G)= |G|+1$, since the group $G$ cannot contain a $k$-barycentric subset (this is however not the only case where this happens, compare below). Conversely, for $k \le |G|$ we have $\BO(k,G) \ge k$.

Since any singleton is $1$-barycentric and since no $2$-barycentric set can exist, we have
\[ \BO(1,G)=1 \text{ and } \BO(2,G)=|G|+1.\]
Yet, as mentioned in the introduction the case $k=3$ is, by contrast, a very challenging problem. We now turn to large values of $k$.

For $k=|G|$, one has to decide whether the full group is $|G|$-barycentric or not, that is whether $\sigma(G)=0$.
It can be seen that only in case when $\rk_2(G)=1 $, the sum $\sigma(G)$ is non-zero.
More precisely (cf.~\cite{gaogerol2} for a detailed argument) the following is true.

\begin{lemma}
\label{sigmaG}
Let $G$ be a finite abelian group. We have
\[
\sigma (G) =
\begin{cases}
b, & \text{ if } \rk_2 (G) =1 \text{ and $b$ denotes the unique element of order $2$},\\
0, & \text{ otherwise.}
\end{cases}
\]
\end{lemma}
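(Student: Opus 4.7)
The plan is to split the sum $\sigma(G) = \sum_{g \in G} g$ by pairing each element with its inverse. Since $g + (-g) = 0$, every pair $\{g, -g\}$ with $g \neq -g$ contributes $0$ to the total. Hence
\[
\sigma(G) = \sum_{g \in G,\ 2g=0} g = \sigma(G[2]),
\]
where $G[2] = \{g \in G : 2g = 0\}$ is the $2$-torsion subgroup of $G$.

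Next I would invoke the structure of $G[2]$: it is an elementary abelian $2$-group, isomorphic to $(\Z/2\Z)^{\rk_2(G)}$. So the question reduces to computing the sum of all elements of $(\Z/2\Z)^r$ where $r = \rk_2(G)$. This is the main (and only) computational step, but it is very light.

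If $r = 0$, the sum is trivially $0$. If $r = 1$, then $G[2] = \{0, b\}$ with $b$ the unique element of order $2$, giving $\sigma(G[2]) = b$. If $r \geq 2$, I would argue coordinatewise in $(\Z/2\Z)^r$: for each fixed coordinate $i$, exactly $2^{r-1}$ elements of $(\Z/2\Z)^r$ have that coordinate equal to $1$, and $2^{r-1}$ is even as soon as $r \geq 2$, so the $i$-th coordinate of the sum is $0$ in $\Z/2\Z$. Thus the entire sum vanishes.

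I do not anticipate a real obstacle; the only subtlety is remembering that the $r = 1$ case behaves differently from $r \geq 2$ precisely because $2^{r-1}$ fails to be even when $r = 1$. Combining the three cases yields exactly the dichotomy stated in Lemma~\ref{sigmaG}.
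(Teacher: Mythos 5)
Your proof is correct and complete. The paper itself gives no proof of Lemma~\ref{sigmaG}, merely citing \cite{gaogerol2}; your argument --- pairing $g$ with $-g$ to reduce $\sigma(G)$ to the sum over the $2$-torsion subgroup $G[2]\cong(\Z/2\Z)^{\rk_2(G)}$, then computing that sum coordinatewise --- is the standard one, and you handle the decisive distinction between $\rk_2(G)=1$ and $\rk_2(G)\ge 2$ (the parity of $2^{r-1}$) exactly right.
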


As a consequence, one gets that
\[
\BO (|G|,G) =\begin{cases}
|G|+1, &  \text{ for } \rk_2(G)=1 ,\\
|G|,  &  \text{ otherwise.}
\end{cases}
\]

We may also compute $\BO (k,G)$ in the cases $k=|G|-1$ and $|G|-2$.  The values of these constants are given by the following proposition.

\begin{proposition} Let $G$ be a finite abelian group. We have, for  $|G|\ge 2$,
\[
\BO (|G|-1,G) =
\begin{cases}
|G|-1, & \text{ if } \rk_2 (G) =1,\\
|G|+1, & \text{ otherwise,}
\end{cases}
\]
and, for  $|G|\ge 3$,
\[
\BO (|G|-2,G) =
\begin{cases}
|G| - 2, & \text{ if } |G| \text{ is odd,}  \\
|G| + 1, & \text{ if  $\exp(G)= 2$ or $|G|=4$,}\\
|G| - 1, & \text{ otherwise.}
\end{cases}
\]
\end{proposition}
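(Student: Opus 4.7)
The plan is to rewrite the barycentric conditions as linear equations in $G$. Using $(|G|-1)g = -g$ and $(|G|-2)g = -2g$, the set $G \setminus \{x\}$ is $(|G|-1)$-barycentric if and only if some $g \in G \setminus \{x\}$ satisfies $g = x - \sigma(G)$, and $G \setminus \{x,y\}$ is $(|G|-2)$-barycentric if and only if some $g \in G \setminus \{x,y\}$ satisfies $2g = x + y - \sigma(G)$. Throughout, $\sigma(G)$ is controlled via Lemma~\ref{sigmaG}.

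The $(|G|-1)$-case is then immediate: the only candidate $g := x - \sigma(G)$ differs from $x$ exactly when $\sigma(G) \ne 0$, i.e., when $\rk_2(G) = 1$. In that case every $(|G|-1)$-subset is barycentric and $\BO(|G|-1,G) = |G|-1$; otherwise, no $(|G|-1)$-subset of $G$ is barycentric, and since the only strictly larger set is $G$ itself, we conclude $\BO(|G|-1,G) = |G|+1$.

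For $\BO(|G|-2, G)$ I would proceed by cases. If $|G|$ is odd, then $\sigma(G) = 0$ and multiplication by $2$ is a bijection on $G$, so $g = (x+y)/2$ is the unique solution and clearly avoids $x$ and $y$; every $(|G|-2)$-subset is thus barycentric. If $\exp(G) = 2$, then $2g = 0$ for all $g$ and $\sigma(G) = 0$ (as $|G| \ge 4$ forces $\rk_2(G) \ge 2$), so the condition reduces to $x+y = 0$, which forces $x = y$; no such subset is then barycentric. The special case $|G|=4$ with $\exp(G) \ne 2$ reduces to $\BO(2,\Z/4\Z) = |G|+1$, since no $2$-subset of any group is $2$-barycentric.

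The substantive case is the remaining one, where $|G|$ is even, $\exp(G) > 2$, and $|G| \ge 6$, with target $\BO(|G|-2,G) = |G|-1$. For the lower bound, since $|G|$ is even the image $2G$ is a proper subgroup, so one can pick $c \in G \setminus (\sigma(G) + 2G)$; and since $\exp(G) > 2$ implies $|G| > |G_2|$ with $G_2 := \{g \in G : 2g = 0\}$, the equation $x+y = c$ has a solution with $x \ne y$, and then $G \setminus \{x,y\}$ admits no barycentric witness. For the upper bound, given any $z \in G$ I parametrize the potential witness by setting $y := 2g + \sigma(G) - z$: the forbidden values of $g$ are those satisfying $g = z$, $g = z - \sigma(G)$, or $2g = 2z - \sigma(G)$, amounting to at most $2 + |G_2|$ elements. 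The main technical step is to verify $|G| > 2 + |G_2|$ in all subcases: for $\rk_2(G) = 1$ this follows from $|G_2| = 2$ and $|G| \ge 6$; for $\rk_2(G) \ge 2$, a short inspection of the $2$-Sylow subgroup shows that $\exp(G) > 2$ entails $|G| \ge 2|G_2|$, which exceeds $|G_2|+2$ since $|G_2| \ge 4$.
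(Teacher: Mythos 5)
Your proposal is correct and follows essentially the same route as the paper: both reduce, via Lemma~\ref{sigmaG}, the barycentric condition to the equations $g=x-\sigma(G)$ and $2g=x+y-\sigma(G)$ and then run a case analysis on the $2$-structure of $G$, with the lower bound in the main case coming from choosing a sum outside the coset $\sigma(G)+2\cdot G$ of the proper subgroup $2\cdot G$. The only (minor) difference is organizational: where the paper proves the upper bound $\BO(|G|-2,G)\le |G|-1$ by splitting into $\rk_2(G)=1$ and $\rk_2(G)\neq 1$ and exhibiting an explicit second element to remove, you handle both at once by parametrizing the witness $g$ and counting at most $2+|\{h\in G: 2h=0\}|<|G|$ forbidden values, which is a valid and slightly more uniform packaging of the same argument.
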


\begin{proof}
Let us first consider the case $k=|G|-1$. Let $A=G \setminus \{ a \}$ be a set of cardinality $|G|-1$. The only $b$ in $G$ for which
$\sigma (A) =(|G|-1)b$ is $-\sigma (A) = a -\sigma (G)$, which is, by Lemma \ref{sigmaG}, different from $a$ (and thus an element of $A$)
if and only if the $2$-rank of $G$ is equal to $1$ (and thus $\sigma(G)\neq 0$).
That is, if the $2$-rank of $G$ is $1$ every set of cardinality $|G|-1$ is barycentric while
otherwise no $(|G|-1)$-barycentric set exists.

We now consider the case $k=|G|-2$; recall $|G|\ge 3$. Let $A=G \setminus \{ a, a' \}$ be a set of cardinality $|G|-2$
(that is, $a \neq a'$). We distinguish two main cases, according to whether $\rk_2(G)=1$ or not.

For $\rk_2(G)$ not equal to $1$, again by Lemma \ref{sigmaG}, we get that $\sigma(A)= - a - a'$.
Since $(|G|-2) b = -2b$ for each $b \in G$, we get that if $\sigma(A) =k b$ for some $b \in G$, then $b \notin \{a,a'\}$.
This implies that $A$ is barycentric if and only if $\sigma (A) \in k \cdot G = -2 \cdot G= 2 \cdot G$.

Thus $\BO (|G|-2,G)= |G|-2$ for $\rk_2 (G)=0$, since in this case $2 \cdot G =G$.
In case $\rk_2 (G)>1$, we observe that $\BO (|G|-2,G)> |G|-2$, as for suitable $a,a'$ we can make it so that $a+a' \notin 2\cdot G$;
indeed, note that $\Sigma_2(G)=G$ for $\exp(G)> 2$ and $\Sigma_2(G)=G\setminus \{0\}$ for $\exp(G)=2$. Moreover, this last observation already implies that for $\exp(G)=2$ one has
 $\BO (|G|-2,G)= |G|+1$, as $2\cdot G =\{0\}$ and therefore no $(|G|-2)$-barycentric set can exist.

We suppose the exponent of $G$ is not $2$, that is $|2\cdot G|>1$ and consider a subset $B=G \setminus \{a^{\ast}\}$ of cardinality $|G|-1$. Due to the translation invariance of the problem we may assume $a^{\ast}=0$.
In order to show that $\BO (|G|-2,G)= |G|-1$ it suffices to find some subset $A$ of $B$ that is $k$-barycentric.
This is equivalent to finding an $A$ of cardinality $k$ whose sum is in $2\cdot G$.
Yet for $A = B \setminus \{c\}$ we have  $\sigma(A) = - c$ and it thus suffices to choose some $c \in 2\cdot G \setminus \{0\}$, which is non-empty by assumption.

For $\rk_2(G)$ equal to $1$, Lemma \ref{sigmaG} implies that $\sigma(A)= b - a - a'$
where $b$ is the element of $G$ of order $2$.
Again, for $A$ to be barycentric, a necessary condition is that $\sigma(A) \in 2 \cdot G$. Since in this case
$2 \cdot G$ is not the full group, we get that $\BO (|G|-2,G)> |G|-2$ as each element of $G$ can be written as
$b - a - a'$ for suitable distinct $a$ and $a'$.
Yet in this case the condition $\sigma(A) \in 2 \cdot G$ might not be sufficient, since
$b-a-a' = -2c$ does not necessarily imply that $c$ is neither $a$ nor $a'$.  Indeed, we observe that
in case $b + a = a' $ we have $b-a-a' = -2a = -2a'$, and $a$ and $a'$ are the only solutions of the equation
$b-a-a' = -2c$ for $c \in G$.

Similarly as above we consider a subset $B=G \setminus \{a^{\ast}\}$ of cardinality $|G|-1$, and again assume $a^{\ast}=0$. We seek to find a $c\in B$ such that $A= B \setminus \{c\}$ is barycentric.
For this to happen we need on the one hand that $\sigma(A) = b - c \in 2 \cdot G$ and that $c$ is not the element of order $2$. That is we need that  $b +  (2 \cdot G \setminus \{0\}) \cap B$ is non-empty, that is
$(b +  (2 \cdot G \setminus \{0\})) \setminus \{0\}$ is non-empty. This is the case if  $|2\cdot G|>2$.
Since $\rk_2(G) = 1$, we have $|2\cdot G|>2$ except for $G$ being a cyclic group of order $4$.
Indeed, for a cyclic group of order $4$, the result is different, and it follows by the observation for $k=2$, mentioned before.
\end{proof}

From now on, we shall therefore assume that
\[
3 \leq k \leq |G|-3,
\]
and thus we can also assume that $|G|\ge 6$.
In the following, we are mainly interested in the case $G=\Z/p\Z$ (with $p$ a prime), and can restrict to considering primes $p\ge 7$. We impose these restrictions consistently, even if in some results they are not needed.

\section{Results on cyclic groups}

We now consider specifically the case of cyclic groups starting from the following result which is contained in \cite{chabela} (see Theorem 4 there).

\begin{theirtheorem}
\label{W1}
Let $p\ge 7$ be a prime and $k$ be an integer such that $3 \leq k \leq p-3$. The following bounds hold
\[
k \leq \BO(k, \Zp)\le\left\lceil \frac{p-1}{k-1}\right\rceil + k.
\]
\end{theirtheorem}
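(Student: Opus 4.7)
The lower bound is immediate from the definition: a $k$-barycentric set has exactly $k$ elements, so no set of cardinality $k-1$ can contain one.

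For the upper bound, the plan is to translate the barycentric condition into a subset-sum problem and then invoke the Dias da Silva--Hamidoune theorem (the affirmative resolution of the Erd\H{o}s--Heilbronn conjecture). Observe that a $k$-element subset of $A$ is barycentric precisely when it can be written as $\{a\}\cup S$ with $a\notin S$, $|S|=k-1$, and $\sigma(S)=(k-1)a$. Hence it suffices to produce a single $a\in A$ for which $(k-1)a \in \Sigma_{k-1}(A\setminus\{a\})$.

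Fix any $a\in A$ and set $B=A\setminus\{a\}$. The Dias da Silva--Hamidoune theorem states that for $B\subseteq\Zp$ and every $s$ with $1\le s\le |B|$,
\[
|\Sigma_s(B)| \;\ge\; \min\bigl(p,\; s(|B|-s)+1\bigr).
\]
Applying this with $s=k-1$, the assumption $|A|\ge \lceil (p-1)/(k-1)\rceil+k$ gives
\[
(k-1)(|A|-k) \;\ge\; (k-1)\left\lceil \frac{p-1}{k-1}\right\rceil \;\ge\; p-1,
\]
so $s(|B|-s)+1\ge p$ and therefore $\Sigma_{k-1}(B)=\Zp$. In particular $(k-1)a\in \Sigma_{k-1}(B)$, which produces the desired $k$-barycentric subset $\{a\}\cup S$ of $A$.

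The only point requiring care is the applicability of Dias da Silva--Hamidoune, i.e.\ that $1\le k-1\le |B|$; the first inequality follows from $k\ge 3$, and the second from $|A|-1\ge \lceil(p-1)/(k-1)\rceil+k-1\ge k$ since the ceiling is at least $1$ for $p\ge 7$. I do not foresee a real obstacle here: once the problem is rephrased as a single subset-sum question, the hypothesis on $|A|$ is exactly tuned so that the Dias da Silva--Hamidoune bound forces $\Sigma_{k-1}(B)$ to cover all of $\Zp$.
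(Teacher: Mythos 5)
Your proposal is correct and follows essentially the same route as the paper: fix $a\in A$, apply the Dias da Silva--Hamidoune theorem to $\Sigma_{k-1}(A\setminus\{a\})$ to conclude it is all of $\Zp$, and pick out a $(k-1)$-subset summing to $(k-1)a$. The only cosmetic difference is that the paper first notes $\left\lceil \frac{p-1}{k-1}\right\rceil + k \leq p$ so that sets of the critical size actually exist, while you instead verify the applicability condition $k-1\le |A|-1$ directly; both are fine.
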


The Theorem of Dias da Silva--Hamidoune \cite{DH}, which we recall here as Theorem~\ref{DSH}, is a crucial tool in its proof.

\begin{theirtheorem}\label{DSH}
Let $p$ be a prime and let $A$ be a subset of $\Zp$. For $k$ a positive integer, one has
\[|\Sigma_k(A)|  \ge \min \{ p, k ( |A| - k  ) + 1 \}.\]
\end{theirtheorem}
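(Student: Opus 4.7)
The theorem is a $k$-fold restricted-sumset analog of Cauchy--Davenport, generalizing the Erd\H{o}s--Heilbronn conjecture, which is the case $k=2$. I would sketch a proof via Alon's Combinatorial Nullstellensatz, following the model of Alon--Nathanson--Ruzsa; the original proof of Dias da Silva and Hamidoune instead works inside the $k$-th exterior power of a $\Zp$-vector space.

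Assume for contradiction that $|\Sigma_k(A)|<\min\{p,\,k(|A|-k)+1\}$; write $n=|A|$, so that $m:=|\Sigma_k(A)|\le k(n-k)$. The case $k(n-k)\ge p$ reduces to the case $k(n-k)<p$ by passing to a suitable subset $A'\subseteq A$ with $k(|A'|-k)<p$, using that $\Sigma_k(A')\subseteq \Sigma_k(A)$. We henceforth assume $k(n-k)<p$. Enlarge $\Sigma_k(A)$ arbitrarily to a set $C\subseteq\Zp$ with $|C|=k(n-k)$ and form
\[
f(x_1,\dots,x_k)=\prod_{c\in C}(x_1+\cdots+x_k-c)\,\prod_{1\le i<j\le k}(x_j-x_i)\in\Zp[x_1,\dots,x_k].
\]
Then $f$ vanishes identically on $A^k$: if the $x_i$ are pairwise distinct elements of $A$ then $x_1+\cdots+x_k\in\Sigma_k(A)\subseteq C$ and the first product vanishes, while if two coordinates coincide the Vandermonde factor vanishes.

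The key step is to apply Alon's Combinatorial Nullstellensatz to the monomial $M=x_1^{n-k}x_2^{n-k+1}\cdots x_k^{n-1}$. Its total degree $k(n-k)+\binom{k}{2}$ equals $\deg f$, and each individual exponent is at most $n-1=|A|-1$. If the coefficient $[M]\,f$ is non-zero in $\Zp$, the Nullstellensatz produces a point of $A^k$ at which $f\neq 0$, contradicting the vanishing of $f$ on $A^k$. Since $\deg M=\deg f$, only the leading homogeneous part $(x_1+\cdots+x_k)^{k(n-k)}\prod_{i<j}(x_j-x_i)$ contributes to $[M]\,f$. Expanding the Vandermonde as $\sum_{\sigma\in S_k}\operatorname{sgn}(\sigma)\prod_i x_i^{\sigma(i)-1}$ and the power by the multinomial theorem,
\[
[M]\,f=\sum_{\sigma\in S_k}\operatorname{sgn}(\sigma)\binom{k(n-k)}{n-k+1-\sigma(1),\,\dots,\,n-\sigma(k)}.
\]
A standard Jacobi--Trudi type manipulation evaluates this alternating sum to $(k(n-k))!\,\prod_{i<j}(j-i) / \prod_{i=1}^{k}(n-i)!$, a positive integer all of whose prime factors are strictly less than $p$ (since $k(n-k)<p$ and $n\le p$), so $[M]\,f\neq 0$ in $\Zp$, completing the contradiction.

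The main technical obstacle is the evaluation of this alternating multinomial sum in closed form and the verification that the resulting product does not vanish modulo $p$; the reduction to the case $k(n-k)<p$ is also more delicate than we have suggested. It is precisely to sidestep these mod-$p$ nonvanishing issues that Dias da Silva and Hamidoune instead work in the exterior algebra, deriving the inequality as a bound on the rank of a linear map built from the vectors $(1,a,a^2,\dots,a^{p-1})_{a\in A}$; that route handles the characteristic $p$ issues by rank considerations rather than by explicit combinatorial identities.
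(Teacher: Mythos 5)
The paper does not actually prove Theorem~\ref{DSH}: it is quoted verbatim as a known result of Dias da Silva and Hamidoune \cite{DH} and used only as a black box (the polynomial method of \cite{ANR} does appear later, but in the proof of Theorem~\ref{UBpol}, not here). So there is no in-paper argument to compare yours with, and your sketch has to stand on its own.

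Your Nullstellensatz outline is the standard ANR-style proof and its core case is sound: when $k(|A|-k)<p$, the polynomial $\prod_{c\in C}(x_1+\cdots+x_k-c)\prod_{i<j}(x_j-x_i)$ with the staircase monomial $x_1^{n-k}\cdots x_k^{n-1}$ works, your closed form $\pm\,(k(n-k))!\,\prod_{i<j}(j-i)\big/\prod_{i=1}^{k}(n-i)!$ for the coefficient is the correct (classical) evaluation, and the nonvanishing mod $p$ argument is fine. The genuine gap is the case $k(|A|-k)\ge p$, where the theorem asserts $\Sigma_k(A)=\Zp$. The reduction you propose --- pass to $A'\subseteq A$ with $k(|A'|-k)<p$ and use $\Sigma_k(A')\subseteq\Sigma_k(A)$ --- cannot deliver this: the bound $k(n'-k)+1$ moves in steps of $k$ as $|A'|=n'$ varies, so the largest admissible subset only gives $|\Sigma_k(A)|\ge p-k+1$, and you can reach exactly $p$ this way only in the lucky event that $k$ divides $p-1$. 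Calling the reduction ``delicate'' flags the problem but does not repair it. The standard fix stays inside your method but gives up the fixed staircase monomial: assuming $\Sigma_k(A)\neq\Zp$, enlarge it to a set $C$ with $|C|=p-1$ and pick distinct exponents $c_1<\cdots<c_k\le n-1$ with $\sum_i c_i=p-1+\binom{k}{2}$, which is possible precisely because $k(n-k)\ge p-1$ (the achievable sums form an interval); the same coefficient formula $\frac{(p-1)!}{c_1!\cdots c_k!}\prod_{i<j}(c_j-c_i)$ is then nonzero mod $p$ and yields the contradiction. Equivalently, one can quote the full ANR theorem for sets of pairwise distinct cardinalities and apply it to a nested chain $A_1\subset\cdots\subset A_k\subseteq A$ whose sizes are tuned so that the lower bound equals $p$. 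With that case supplied (and a reference or proof for the coefficient identity, which you assert but is indeed standard), the argument is complete; your closing remarks on the exterior-algebra proof of \cite{DH} are accurate context but not a substitute for the missing step.
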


For the convenience of the reader and since it of some relevance for the subsequent discussion, we include a proof of the upper bound in Theorem \ref{W1} (the lower bound being trivial).

\begin{proof}[Proof of Theorem \ref{W1}]
Notice first that the upper bound satisfies always
\[
\left\lceil \frac{p-1}{k-1}\right\rceil + k \leq p.
\]
We may therefore consider a set $ A \subset \Zp $ with $|A|=\left\lceil \frac{p-1}{k-1}\right\rceil + k$ and
choose an $a \in A$. By Theorem \ref{DSH},
we have
\begin{eqnarray*}
|\Sigma_{k-1} (A\setminus \{ a \} )| & \geq & \min\{p, (k-1)(|A \setminus \{ a \} | -k +1)+1\}\\
										& = & \min\{p, (k-1)(|A| -k ) +1\}= p.
\end{eqnarray*}
Hence $\Sigma_{k-1} (A\setminus \{a\})=\Z /p\Z$ which implies that there
exists $U\subset A\setminus\{a\}$ with $|U|=k-1$ such that $\sigma (U)= a(k-1)$.
Consequently $U \cup \{a\}$ is a $k$-barycentric subset of $A$.
\end{proof}

The quality of the bounds in the above results depends significantly on the relative size of $k$ and $p$. We discuss this in detail below, mentioning the respective known and new results.

For large values of $k$, say if $k\ge cp$ for $c$ a fixed real number ($0<c<1$), the preceding result shows that
\[
k \leq \BO(k,\Zp) \le k + \frac{1}{c} + O(p^{-1})
\]
and the value of $\BO(k,\Zp)$ is known asymptotically up to an additive constant.

More precisely, in the range $(p+1)/2 \le k \le p-3$, Theorem \ref{W1} gives
\[
k \leq \BO(k,\Zp)\le k+2.
\]
We shall improve this and, indeed, we obtain the precise value for $\BO(k,\Zp)$ for $k$ fulfilling this condition. In fact, we prove the following somewhat more general technical result.

\begin{proposition}
\label{proptech}
Let $n\ge 6$ be an integer and $k$ be an integer such that $(n+1)/2 \le k \le n-3$.
Assuming that $n$ is coprime to both $k$ and $k+1$, one has $\BO(k,\Z /n\Z)=k+1.$
\end{proposition}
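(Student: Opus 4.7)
The plan is to establish both matching bounds $\BO(k,\Z/n\Z) \le k+1$ and $\BO(k,\Z/n\Z) \ge k+1$. A useful preliminary observation is that under these hypotheses, $n$ is forced to be odd (since one of $k, k+1$ is even and $n$ is coprime to it), so by Lemma~\ref{sigmaG} one has $\sigma(\Z/n\Z) = 0$. Setting $m = n-k$, the hypotheses translate to $3 \le m \le (n-1)/2$ with $\gcd(m,n) = \gcd(m-1,n) = 1$. Since $-k \equiv m \pmod n$, the identity $\sigma(A) = -\sigma(\Z/n\Z \setminus A)$ establishes a useful bijection: a $k$-subset $A$ fails to be $k$-barycentric if and only if its complement $S = \Z/n\Z \setminus A$ is $m$-barycentric.

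For the upper bound $\BO(k,\Z/n\Z) \le k+1$, I would take any $(k+1)$-subset $A \subset \Z/n\Z$ and consider the affine bijection $\psi : a \mapsto k^{-1}(\sigma(A) - a)$ on $\Z/n\Z$. Observe that $A \setminus \{a\}$ is $k$-barycentric precisely when $\psi(a) \in A \setminus \{a\}$. Using $\gcd(k+1, n) = 1$, one sees $\psi$ has a unique fixed point $a_0 = (k+1)^{-1}\sigma(A)$. If no $a \in A$ works, then $\psi(A) \cap A \subset \{a_0\}$, so $\psi$ sends at least $k$ elements of $A$ into the complement $\Z/n\Z \setminus A$; but that complement has size only $n-k-1$, which is less than $k$ by the assumption $k \ge (n+1)/2$, a contradiction.

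For the lower bound $\BO(k,\Z/n\Z) \ge k+1$, via the bijection above it suffices to exhibit an $m$-barycentric $m$-subset. When $m$ is odd, $S = \{0, 1, \ldots, m-1\}$ does the job, since $\sigma(S) = m\cdot (m-1)/2$ and $(m-1)/2 \in S$. When $m$ is even, I would try $S = \{0, 1, \ldots, m-2, z\}$ with $z = z_j := jm - (m-1)(m-2)/2 \pmod n$, which forces $\sigma(S) = mj$ with $j \in S$, provided $z$ lies outside $\{0, \ldots, m-2\}$ so that $|S|=m$.

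The main obstacle is verifying in the $m$ even case that some $j \in \{0, \ldots, m-2\}$ does make $z_j \notin \{0, \ldots, m-2\}$. The key observation is that the values $z_0, z_1, \ldots, z_{m-2}$ form an arithmetic progression in $\Z/n\Z$ of length $m-1$ with common difference $m$; these are all distinct since $\gcd(m,n)=1$. Because $m \le (n-1)/2$, the cyclic distance between $z_0$ and $z_1$ equals $\min(m, n-m) = m$, which strictly exceeds the diameter $m-2$ of any interval of length $m-1$ in $\Z/n\Z$. Thus $\{z_0, \ldots, z_{m-2}\}$ cannot be contained in $\{0, \ldots, m-2\}$, so at least one $z_j$ lies outside the forbidden set, completing the construction and hence the proof.
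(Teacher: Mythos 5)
Your proof is correct, and it splits naturally into two halves of unequal novelty. Your upper bound is essentially the paper's own argument in different clothing: the paper notes $\Sigma_k(A)=\sigma(A)-A$ and $|k\cdot A|=k+1$, so $|\Sigma_k(A)\cap k\cdot A|\ge 2(k+1)-n\ge 3$, and discards the one "diagonal" element $(k+1)^{-1}k\sigma(A)$; your map $\psi(a)=k^{-1}(\sigma(A)-a)$ with its unique fixed point $(k+1)^{-1}\sigma(A)$ is exactly this intersection count phrased via injectivity, so the two are the same proof. Your lower bound, however, is a genuinely different route. The paper invokes its Lemma \ref{lbk+1}, a direct construction of a non-barycentric $k$-set (the interval $\{0,\dots,k-1\}$ for $k$ even, a punctured interval plus an extra element for $k$ odd), which works for the whole range $3\le k\le n-3$ with $\gcd(k,n)=1$ and is reused later for Theorems \ref{th8} and \ref{UBpol}. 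You instead observe that $n$ must be odd, hence $\sigma(\Zn)=0$, and that since multiplication by $k$ is a bijection the sets $k\cdot A$ and $k\cdot S$ (with $S$ the complement of $A$) partition $\Zn$; this gives the clean duality that a $k$-set $A$ is non-barycentric exactly when its complement $S$, of size $m=n-k\le (n-1)/2$, is $m$-barycentric, and exhibiting a small $m$-barycentric set is easy (an interval for $m$ odd, your shifted interval with the arithmetic-progression argument for $m$ even, which checks out since $\gcd(m,n)=1$ and $m<n/2$). What each buys: your complementation trick is arguably slicker and explains why the large-$k$ regime is easy (the complement is small), but it is confined to that regime and to odd $n$, so it could not replace Lemma \ref{lbk+1} where the paper needs it for smaller $k$; the paper's construction is less elegant but uniform in $k$ and serves several results at once.
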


Of course this result yields the following result as a special case, establishing what we announced just above.

\begin{theorem}
\label{thk+1}
Let $p\ge 7$ be a prime and $k$ be an integer such that $(p+1)/2 \le k \le p-3$, then $\BO(k, \Zp ) = k + 1$.
\end{theorem}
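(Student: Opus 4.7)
My plan is to match upper and lower bounds. First observe that the coprimality hypotheses force $n$ to be odd: since one of $k,k+1$ is even, if $n$ were even it would share a factor of $2$ with that one. In particular, by Lemma~\ref{sigmaG}, $\sigma(\Z/n\Z)=0$, a fact the lower bound will exploit.

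For the upper bound $\BO(k,\Z/n\Z)\le k+1$, fix an arbitrary $(k+1)$-subset $B \subseteq \Z/n\Z$ and set $s=\sigma(B)$. A $k$-subset $A = B\setminus\{b\}$ is $k$-barycentric iff there exist distinct $a,b \in B$ with $b = s - ka$, so introducing the bijection $g(x) = s - kx$ of $\Z/n\Z$ (bijective by $\gcd(k,n)=1$) reduces the task to finding $a \in B \cap g^{-1}(B)$ with $g(a)\ne a$. The estimate
\[
|B \cap g^{-1}(B)| \ge 2(k+1) - n \ge 3,
\]
which uses $k\ge (n+1)/2$, supplies at least three candidates. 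The fixed-point equation $g(a)=a$ rewrites as $(k+1)a=s$ and has at most one solution thanks to $\gcd(k+1,n)=1$, so excising it still leaves at least two valid elements. This is the conceptual heart of the upper bound, and it is where both parts of the coprimality assumption play distinct roles: $\gcd(k,n)=1$ makes $g$ a bijection, while $\gcd(k+1,n)=1$ caps the number of fixed points at one.

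For the lower bound $\BO(k,\Z/n\Z)\ge k+1$, I exhibit a non-barycentric $k$-subset by a complement trick. If $C \subseteq \Z/n\Z$ has size $n-k$, contains $0$, and satisfies $\sigma(C)=0$, then $A := \Z/n\Z \setminus C$ has $\sigma(A) = -\sigma(C)=0$ and $\sigma(A)/k = 0 \notin A$, so $A$ is not $k$-barycentric. It remains to construct a zero-sum subset $C' = C\setminus\{0\}$ of $\Z/n\Z\setminus\{0\}$ of size $m := n-k-1$; the hypothesis $(n+1)/2 \le k \le n-3$ translates to $2 \le m \le (n-3)/2$. For $m$ even I take $C' = \{\pm 1,\pm 2,\ldots,\pm m/2\}$, whose elements are distinct and nonzero since $i+j \le m \le n-2$, and which sums to zero by pairing. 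For $m$ odd I instead take the pairs $\{\pm 1,\ldots,\pm (m-3)/2\}$ together with the zero-sum triple $\{(m-1)/2,\,(m+1)/2,\,-m\}$; the inequality $m \le (n-3)/2$ is exactly what is needed to verify that this triple is disjoint from the pairs.

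The main obstacle is really the odd-$m$ case of the lower bound: a naive pairing leaves one unpaired element and fails, so a supplementary zero-sum triple must be carried and its disjointness from the pairs checked. The upper bound is more transparent once $g$ is introduced, the subtlety being that the target size $k+1$ is tight---the intersection bound gives exactly $3$ at the boundary $k=(n+1)/2$, and the single fixed point of $g$ consumes one of those three---so no weakening of the hypotheses would suffice.
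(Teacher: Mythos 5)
Your proposal is correct, and its upper-bound half is essentially the paper's argument for Proposition~\ref{proptech}: the paper intersects $\Sigma_k(A)=x-A$ with $k\cdot A$, gets at least $2(k+1)-n\ge 3$ common elements, and discards the single element $(k+1)^{-1}kx$ coming from the ``fixed-point'' coincidence $a=a'$; your bijection $g(x)=s-kx$ is just a repackaging of the same counting, with $\gcd(k,n)=1$ and $\gcd(k+1,n)=1$ playing exactly the same two roles. Where you genuinely diverge is the lower bound. The paper quotes Lemma~\ref{lbk+1}, an explicit interval-type construction ($\{0,\dots,k-1\}$ for $k$ even, a slightly perturbed interval for $k$ odd) which only needs $\gcd(k,n)=1$ and works in the full range $3\le k\le n-3$; this generality is why the paper reuses the same lemma for Theorems~\ref{th8} and~\ref{UBpol}. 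You instead build the complement: using $\sigma(\Z/n\Z)=0$ (valid since $n$ must be odd) you remove a zero-sum set $C\ni 0$ of size $n-k$, assembled from symmetric pairs $\pm i$ plus, for odd $m=n-k-1$, the zero-sum triple $\{(m-1)/2,(m+1)/2,-m\}$, and then note that $\sigma(A)=0=k\cdot 0$ with $0\notin A$. Your disjointness checks go through precisely because $m\le (n-3)/2$, i.e.\ because $k\ge(n+1)/2$, so this construction is tailored to (and confined to) the large-$k$ regime of this theorem, whereas the paper's lemma is the more portable tool; on the other hand your complement trick is arguably more transparent in this range and makes visible why the extremal non-barycentric $(k)$-sets here are ``complements of small zero-sum sets avoiding the forced barycenter''. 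Both arguments are sound for the statement at hand.
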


In a certain sense $k = (p+1) / 2$ is an actual threshold.
We discuss in detail the value just below, that is the case $k = (p-1)/2$, where Theorem \ref{W1} gives
\[
k \leq \BO(k,\Zp)\le k+3.
\]
Here we see that the value of $\BO(k,\Zp)$ starts to depend on `number theoretic' properties of $p$.
And, the proof indicates that this is not an isolated phenomenon but related phenomena are to be expected for other $k$ somewhat below $p/2$.

The precise result we obtain is the following.

\begin{theorem}
\label{th8}
Let $p \ge 7$ be a prime and $k = (p-1)/2$. Then,
\[
\mathsf{BO}(k,\mathbb{Z}/p\Z )=
\begin{cases}
k+1, & \text{ if the multiplicative order of 2 modulo $p$ is odd},\\
k+2, & \text{ if it is even}.
\end{cases}
\]
\end{theorem}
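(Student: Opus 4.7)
The plan is to exploit the identity $k \equiv -1/2 \pmod{p}$, which turns the barycentric condition ``$\sigma(U) = k g$ for some $g \in U$'' into the cleaner ``$-2\sigma(U) \in U$''. Consequently, for a set $A$ of size $k+j$, $A$ contains a $k$-barycentric subset iff there exist $S \subset A$ with $|S| = j$ and $b \in A \setminus S$ satisfying $b = 2\sigma(S) - \tau$, where $\tau := 2\sigma(A)$. Translating $A$ by $t$ shifts $\tau$ by $2(k+j)t$, which modulo $p$ equals $t$ (for $j=1$) or $3t$ (for $j=2$); both multipliers are invertible, so I will normalize $\tau = 0$ throughout. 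The lower bound $\BO \ge k+1$ is routine (any explicit non-barycentric $k$-set suffices), so the substance lies in distinguishing $\BO = k+1$ from $\BO = k+2$ by the parity of $d := \mathrm{ord}_p(2)$.

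First I would handle the level $k+1$. For $|A| = k+1$ with $\tau = 0$, removing $a \in A$ yields a barycentric subset iff $2a \in A \setminus \{a\}$, so $A$ has no barycentric subset iff $A \cap 2 \cdot (A \setminus \{0\}) = \emptyset$. A cardinality count forces $0 \in A$, and then $A^* := A \setminus \{0\}$ and $2 \cdot A^*$ partition $(\Z/p\Z)^*$. Such a partition exists iff every orbit of multiplication by $2$ on $(\Z/p\Z)^*$ has even length, i.e.\ iff $d$ is even. In the even case, I construct $A$ by selecting alternate elements in each orbit; the normalization $\sigma(A) = 0$ is automatic, since each orbit's alternating sum equals $a(2^d - 1)/3$, which vanishes in $\Z/p\Z$ (valid for $p \ne 3$). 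When $d$ is odd no such $A$ exists, so $\BO \le k+1$ and hence $\BO = k+1$; when $d$ is even the construction gives $\BO \ge k+2$.

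It remains to show $\BO \le k+2$. Take $|A| = k+2 = (p+3)/2$ with $\tau = 0$; the barycentric condition becomes the existence of pairwise distinct $c_1, c_2, b \in A$ with $2(c_1 + c_2) = b$. If $0 \in A$, a pigeonhole among the $(p-1)/2$ antipodal pairs of $(\Z/p\Z)^*$, using $|A \setminus \{0\}| = (p+1)/2$, produces $a \in A^*$ with $-a \in A^*$, and then $(a, -a, 0)$ is the desired triple. If $0 \notin A$, then by Theorem~\ref{DSH} we have $|\Sigma_2(A)| \ge 2|A| - 3 = p$, so $\Sigma_2(A) = \Z/p\Z$; hence each $b/2$ for $b \in A$ admits a distinct-pair decomposition in $A$. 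Assuming no barycentric subset exists, every such decomposition must contain $b$, forcing it to be the unique pair $\{b, -b/2\}$ and in particular $-b/2 \in A$. Therefore $A$ is closed under the fixed-point-free map $\phi(x) := -x/2$, so the order $m$ of $-1/2$ in $(\Z/p\Z)^*$ divides $|A| = (p+3)/2$. Combined with $m \mid p - 1$, this forces $m \mid (p+3) - (p-1) = 4$; the three cases $m \in \{1, 2, 4\}$ each yield $p \mid 15$, contradicting $p \ge 7$.

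I expect the main obstacle to be this last paragraph --- extracting from the absence of a barycentric subset the rigid structure that $A$ is $\phi$-closed, and then reducing the resulting divisibility constraints to the clean arithmetic contradiction $m \mid 4$. Everything else --- the reformulation via $k \equiv -1/2$, the cyclic-orbit construction, and the pigeonhole for $0 \in A$ --- is essentially systematic.
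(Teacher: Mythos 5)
Your proposal is correct and follows essentially the same route as the paper: the same normalization $\sigma(A)=0$, the same reformulation via $k\equiv -1/2 \pmod p$ leading at level $k+1$ to the partition of $(\Zp)\setminus\{0\}$ into $A\setminus\{0\}$ and $2\cdot(A\setminus\{0\})$, the same orbit-parity argument and even-powers-of-$2$ construction (with the sum vanishing automatically), and at level $k+2$ the same use of $\Sigma_2(A)=\Zp$ (Dias da Silva--Hamidoune) to force closure under multiplication by $-1/2=-(k+1)$, whose order must divide $4$, contradicting $p\ge 7$. The only deviations are cosmetic: you dispose of the case $0\in A$ by a separate antipodal-pair pigeonhole where the paper uses the divisibility $t\mid |C|-1$, and your ``routine'' lower bound $\BO(k,\Zp)\ge k+1$ is precisely the paper's Lemma \ref{lbk+1} (an explicit construction with a small parity case split), which you should cite or spell out.
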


Just the bounds in this theorem can be extended to further values of $k$.

\begin{theorem}
\label{UBpol}
Let $p \ge 7$ be a prime and $k$ be an integer such that $ (p+2)/3 \le k \le p-3$.
One has
\[
k+1 \le \mathsf{BO}(k,\Z/ p\Z) \le k+2.
\]
\end{theorem}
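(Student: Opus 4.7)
For the lower bound $\BO(k,\Z/p\Z) \ge k + 1$ it suffices to exhibit a $k$-subset $A \subset \Z/p\Z$ with $\sigma(A)/k \notin A$. When $k$ is even, the set $A = \{0, 1, \ldots, k-1\}$ works, because $\sigma(A)/k \equiv (k-1)/2 \pmod p$ (computed using $2^{-1} \equiv (p+1)/2$) lies outside $\{0, \ldots, k-1\}$ for $p \ge 7$ and $k \le p-3$. When $k$ is odd I would instead take $A = \{0, 1, \ldots, k-2, c\}$ for a suitable $c \in \Z/p\Z \setminus \{0, 1, \ldots, k-2\}$: the equations $\sigma(A)/k = j$ for $j \in \{0, \ldots, k-2\}$, together with the self-coincidence $\sigma(A)/k = c$, impose only $O(k)$ forbidden values on $c$, while $c$ ranges over $p - k + 1$ candidates, so valid choices remain throughout the range $(p+2)/3 \le k \le p-3$.

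For the upper bound $\BO \le k+2$, let $|A| = k+2$ and $\Sigma = \sigma(A)$; I argue by contradiction assuming no $k$-barycentric subset. Two applications of Theorem~\ref{DSH} give $|\Sigma_{k-1}(A)| \ge \min\{p, 3k - 2\} = p$ (using $k \ge (p+2)/3$), hence $\Sigma_{k-1}(A) = \Z/p\Z$, together with $|\Sigma_2(A)| \ge 2k + 1$. The no-barycentric assumption translates, after complementation inside $A \setminus \{a\}$ (of size $k+1$), to the condition $\Sigma - ka \notin \Sigma_2(A \setminus \{a\})$ for every $a \in A$; indeed, a $(k-1)$-subset of $A \setminus \{a\}$ summing to $(k-1)a$ would, together with $a$, form a $k$-barycentric set.

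Using the inclusion $\Sigma_2(A) \setminus \Sigma_2(A \setminus \{a\}) \subset a + (A \setminus \{a\})$, each $a \in A$ satisfies one of two alternatives: either (I) $\Sigma - ka \in \overline{\Sigma_2(A)}$, or (II) $\psi(a) := \Sigma - (k+1)a \in A \setminus \{a\}$. Since $a \mapsto \Sigma - ka$ is injective on $\Z/p\Z$, alternative (I) covers at most $|\overline{\Sigma_2(A)}| \le p - 2k - 1$ elements, so the set $A_2$ of elements in alternative (II) has size at least $(k+2) - (p - 2k - 1) = 3k + 3 - p \ge 5$. Moreover, for each $a \in A_2$ the pair $\{a, \psi(a)\}$ is the \emph{unique} 2-subset of $A$ summing to $\Sigma - ka$, since any other representing pair would avoid $a$ and thus lie in $\Sigma_2(A \setminus \{a\})$, contradicting the previous paragraph.

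The main obstacle is converting the $\ge 5$ elements of $A_2$, each producing a uniquely-represented pair sum under the affine bijection $\psi$ (of multiplier $-(k+1)$), into a contradiction. My plan is to iterate the reformulation step once more: from $(k-1)a \in \Sigma_{k-1}(A) \setminus \Sigma_{k-1}(A \setminus \{a\}) \subset a + \Sigma_{k-2}(A \setminus \{a\})$, deduce $(k-2)a \in \Sigma_{k-2}(A \setminus \{a\})$ for every $a \in A$, so that every $a$ is the barycenter of some $(k-2)$-subset $C \subset A \setminus \{a\}$ and hence $C \cup \{a\}$ is a $(k-1)$-barycentric subset. Attempting to extend $C \cup \{a\}$ to a $k$-barycentric set by adjoining some $z \in A \setminus (C \cup \{a\})$ (there are three candidates) succeeds unless the affine image $k \cdot C - (k-1)a$ avoids $A \setminus (C \cup \{a\})$ entirely. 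Combining this affine rigidity (which must hold for every admissible pair $(a, C)$) with the pair-uniqueness property on $A_2$, and invoking the Vosper-type stability of equality in Theorem~\ref{DSH} when needed (namely that $A \setminus \{a\}$ must be an arithmetic progression whenever $|\Sigma_2(A \setminus \{a\})| = 2k-1$), should restrict $A$ to a very structured configuration in which a barycentric $k$-subset can be exhibited directly, contradicting the assumption.
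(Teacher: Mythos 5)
Your upper-bound argument is not a proof. Everything through the extraction of the set $A_2$ with $|A_2|\ge 3k+3-p\ge 5$ and the uniqueness of the representing pairs is correct, but the final paragraph, which is where the contradiction is supposed to come from, is explicitly a plan (``my plan is \dots'', ``should restrict $A$ to a very structured configuration'') and no such structure is ever derived. The missing step is exactly the hard part: knowing that every $a\in A$ is the barycenter of some $(k-1)$-subset $C\cup\{a\}$ whose affine image $k\cdot C-(k-1)a$ avoids the three remaining elements, together with the pair-uniqueness on $A_2$, does not visibly force a contradiction; and the ``Vosper-type stability'' of Theorem~\ref{DSH} you want to invoke is an inverse statement available in usable form essentially only for $k=2$ (cf.\ \cite{karolyi,lev}) --- you neither formulate the version you need nor verify that its near-equality hypotheses hold here. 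This bookkeeping approach is in the spirit of \cite[Theorem 8]{chabela} and of the proof of Theorem~\ref{th8}, and the paper states explicitly that pushing it much below $k=(p-1)/2$ becomes infeasible; to cover the whole range $k\ge (p+2)/3$ the paper switches methods entirely. It translates so that $\sigma(A)=0$, observes that a $k$-barycentric subset amounts to a solution of $a+a'+kb=0$ in pairwise distinct elements of $A$, and proves via the polynomial method of \cite{ANR} (a nonzero coefficient of $(X_1+X_2+X_3)^m(X_1-X_2)(X_3-kX_1)(X_3-kX_2)$) that the set $\{a_1+a_2+ka_3:\ a_i\in A \text{ pairwise distinct}\}$ is all of $\Zp$ as soon as $|A|\ge (p+8)/3$; then $0$ lies in it and $A\setminus\{a_1,a_2\}$ is $k$-barycentric. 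Without a complete argument at your final step, $\BO(k,\Zp)\le k+2$ remains unproved.

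There is also a gap in your lower bound for odd $k$. The counting ``about $k$ forbidden values of $c$ against $p-k+1$ candidates'' only produces a good $c$ when $p-k+1$ exceeds the number of forbidden values, i.e.\ roughly when $k\le (p-1)/2$; for odd $k$ with $(p+1)/2\le k\le p-3$ the candidate set can have as few as $4$ elements while the forbidden set has about $k$ elements, and you give no reason why the forbidden values should miss the candidates, so the claim ``valid choices remain throughout the range'' is unjustified. The paper instead uses an explicit construction (Lemma~\ref{lbk+1}): for odd $k\ge 5$ the set $\{0,\dots,(k-1)/2\}\cup\{(k+5)/2,\dots,k\}\cup\{k+2\}$ has $k$ elements, sum $k(k+1)/2$, and is not $k$-barycentric since $(k+1)/2$ is not in it; this works for every $k$ in the stated range (your even-$k$ set agrees with the paper's).
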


The lower bound is obtained by a direct construction (see Lemma \ref{lbk+1}), the upper bound is obtained via an application of the polynomial method in its classical form by Alon, Nathanson, and Ruzsa \cite{ANR}.
It would not be difficult to derive some (incomplete) criteria when equality at the upper or lower bound holds; we do however not address this problem. By contrast, giving a full and meaningful characterization seems difficult.

For ever smaller values of $k$, the problem seems to become increasingly difficult. We however note that, as a consequence of Theorem \ref{W1}, we get that if $k/p^{1/2}$ tends to infinity then
\[\BO(k, \Zp) \sim k\]  as $p$ (and $k$) tends to infinity.
And, if we only assume that $k / p^{1/2}$ is bounded away from $0$,
then $\BO(k, \Zp) /k$ remains bounded and therefore the order of $\BO(k, \Zp)$ is still $k$.

It is not clear to us what the precise or, more realistically, only more precise values should be; we thus refrain from detailed speculations but still include some remarks. It seems likely that for $k$ in this range the upper bound in Theorem \ref{W1} is not that close to the true value. On the one hand, before one applies Theorem \ref{DSH}, the Theorem of Dias da Silva--Hamidoune, there is already some loss (roughly speaking, avoiding this loss is what we do to obtain the upper bound in Theorem \ref{UBpol}).
On the other hand, it is generally believed that near extremal cases in the Theorem of Dias da Silva--Hamidoune arise from sets being close to arithmetic progressions; for such results for the case $k=2$ see, e.g., \cite{karolyi,lev}. And, a set being close to an arithmetic progression should contain large barycentric subsets; see just below for some details.

It was noted in \cite{luca} that, for $k$ fixed, for cyclic groups of sufficiently large order, upper bounds for $\BO(k,G)$ can be obtained from Szemer\'edi's theorem and its quantitative refinements. To avoid potential confusion we add the following explanation. While Szemer\'edi's theorem  and its refinements are typically stated as results on subsets of integers (as opposed to subsets of cyclic groups), one can of course use them to derive analog results on subsets of cyclic groups. In fact this would be sufficient for our purpose, yet, to put this in context we mention in addition that under suitable assumptions, also the converse is true, that is a result established for cyclic groups implies one for subsets of the integers;
in fact, to pass to cyclic groups, to establish a result for cyclic groups, and to reinterpret it for integers is the common general framework for (recent) results around Szemer\'edi's theorem.  So, conversely, one cannot expect (substantially) better results for cyclic groups than those known for the integers.

We mention how Szemer\'edi's theorem is applicable to this problem. It can be shown that an arithmetic progression of odd length is barycentric and an arithmetic progression of even length contains a barycentric subset of cardinality smaller by two. Thus, to obtain an upper bound for the $k$-th barycentric Olson constant of a cyclic group of large order, one can invoke Szemer\'edi's theorem or a refined version thereof, to argue that a set of sufficiently large cardinality contains an arithmetic progression of length $k$ or $k+2$, depending on whether $k$ is odd or even, respectively; this then implies the existence of a $k$-barycentric subset of the set.
In particular, Szemer\'edi's theorem implies that for fixed $k$ the linear (in the order of the group) upper bound provided by Theorem \ref{W1} does not yield the correct order anymore (see \cite{luca}).

As mentioned in the introduction, for $k=3$ the two problems, barycentric Olson constant and quantitative version of Szemer\'edi's theorem, are particularly closely related as for three elements (the case $k=3$) being barycentric and being in arithmetic progression are equivalent.
Specifically, we recall that a recent result of Sanders \cite{sanders} asserts (using the equivalence mentioned above), for some $c>0$,
\[  \BO (3, \Z /n\Z) \leq c n \frac{(\log \log n)^5}{\log n} . \]

However, for other $k$,  being barycentric and being in arithmetic progression are not equivalent; for example as indicated above,
for $k\ge 5$ odd, being barycentric is a (strictly) weaker propery than being in arithmetic progression.
So, one can hope to get upper bounds for $\BO (k, \Z /n \Z)$ that are better than those following from results establishing the existence of arithmetic progressions.
Indeed, for $k\ge 6$ fixed and $n$ sufficiently large, results of Schoen and Shkredov \cite{SS} yield
an upper bound of the form
\[
\BO (k, \Z /n \Z) \leq c' n \exp \left( - c \left( \frac{\log n}{\log \log n} \right)^{1/6}\right),
\]
with $c,c'>0$. To avoid a potential confusion, we point out that we need at least $k=6$;
the closely related example given in the quoted paper \cite{SS} that could suggest that $k=5$
is sufficient is not exactly what we need, as there one has an additional/different variable in the equation.

Based on the classical construction of Behrend \cite{Behrend} a lower bound of this general form can be obtained.
More concretely, Alon \cite{alon} showed
that for $k\ge 2$ there exists a subset of the integers $\{1, \dots, n\}$ of size at least
\[n \exp(-10 \sqrt{ \log k \log n} )\]
such that the equation (in the integers)
\[x_1 + \dots + x_k = k x_{k+1}\]
has no solution in this set except for the trivial ones where all $x_i$'s are equal.

For fixed $k$ and large $n$ this could be translated quite directly into a lower bound for $\BO(k, \Zn)$, where we need to avoid solutions not only in the integers but modulo $n$.
However, we also wish to consider the case that $k$ grows with $n$ (as opposed to $k$ being fixed).
Thus, we carry out a more detailed analysis focusing on this aspect not so common in the various investigations of Behrend-like constructions.

We shall first prove the following technical result.

\begin{theorem}
\label{theoBehrend}
Let $n\ge 6$ and $k$ be two integers such that $3 \le k \le n-3$. One has
\[
\BO(k,\Z/ n \Z) \geq \max_{m \in \N} \left\{ \frac{1}{m} \left( \frac{(n/k)^{1/m} - k}{k-1} \right)^{m-2} \right\}.
\]
\end{theorem}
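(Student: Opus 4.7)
The plan is to adapt Behrend's sphere construction to produce an explicit subset of $\Z/n\Z$ with no $k$-barycentric subset. Fix an integer $m \ge 2$ and set
\[
L = \left\lfloor \frac{(n/k)^{1/m} - k}{k-1} \right\rfloor,
\]
which we may assume to be at least $1$ (else the asserted bound is vacuous). Put $B = (k-1)L + k$, so that $B - 1 = (k-1)(L+1)$ and $kB^m \le n$. Consider the lattice cube $V = \{0,1,\dots,L\}^m$ and partition it by the squared norm $\|v\|^2 = \sum_l v_l^2 \in \{0,1,\dots,mL^2\}$; a pigeonhole argument produces a level set
\[
S = \{v \in V : \|v\|^2 = R\}
\]
of cardinality $|S| \ge (L+1)^m/(mL^2+1) \ge L^{m-2}/m$. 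Finally, encode $V$ into $\Z/n\Z$ via the base-$B$ map $\phi(v) = \sum_{l=1}^{m} v_l B^{l-1} \bmod n$; since $L < B$, the map $\phi$ is injective with integer image lying in $[0, L(B^m-1)/(B-1)] \subset [0,n)$.

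The central step is to verify that $A := \phi(S)$ contains no $k$-barycentric subset, for then $\BO(k, \Z/n\Z) > |A| \ge L^{m-2}/m$ and optimizing over $m$ yields the theorem. Suppose for contradiction that distinct $g_1,\dots,g_k \in A$ satisfy $\sum_i g_i = k g_j$ in $\Z/n\Z$, and write $v^{(i)} = \phi^{-1}(g_i) \in S$. Rewriting this as $\sum_{i \ne j} g_i \equiv (k-1) g_j \pmod{n}$ and using $B - 1 = (k-1)(L+1)$, one computes
\[
(k-1) \max_{v \in V} \phi(v) = \frac{(k-1)L}{B-1}(B^m - 1) = \frac{L}{L+1}(B^m - 1) < B^m \le n,
\]
so both sides of the congruence lie in $[0,n)$ and the congruence lifts to the integer equality $\sum_{i \ne j} \phi(v^{(i)}) = (k-1) \phi(v^{(j)})$. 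Because the coordinate sums $\sum_{i \ne j} v^{(i)}_l$ and $(k-1) v^{(j)}_l$ are each bounded by $(k-1)L \le B - 1$, uniqueness of base-$B$ expansion forces the componentwise identity $\sum_{i \ne j} v^{(i)}_l = (k-1) v^{(j)}_l$ for every $l$, so $v^{(j)}$ is the Euclidean centroid of $\{v^{(i)} : i \ne j\}$. Since all these vectors lie on the sphere $\|\cdot\|^2 = R$, strict convexity of $\|\cdot\|^2$ gives
\[
\|v^{(j)}\|^2 \le \frac{1}{k-1} \sum_{i \ne j} \|v^{(i)}\|^2 = R,
\]
with equality only when the $v^{(i)}$ for $i \ne j$ all coincide; combined with $\|v^{(j)}\|^2 = R$ this forces $v^{(1)} = \cdots = v^{(k)}$, contradicting the distinctness of the $g_i$'s.

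The delicate point is the joint calibration of $B$ and the modulus: the base must exceed $(k-1)L$ for the componentwise identification to operate without carries, yet $B^m$ must stay below $n$ so that modular arithmetic introduces no spurious barycentric solutions. The choice $B = (k-1)L + k$ realizes the identity $B - 1 = (k-1)(L+1)$, which makes both constraints tight simultaneously and yields exactly the exponent $((n/k)^{1/m} - k)/(k-1)$ appearing in the theorem; cruder choices of $B$ cost multiplicative factors of $k$ in the final bound.
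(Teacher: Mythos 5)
Your construction is essentially the paper's: a Behrend-type sphere of digit vectors, written in a base chosen so large that summing $k-1$ digits creates no carries, with the modulus made irrelevant because every quantity involved stays in $[0,n)$ (the paper packages this last point as a Freiman isomorphism onto a set of integers in $\{0,\dots,\lfloor n/k\rfloor\}$); your strict-convexity/Jensen step is just a variant of the paper's equality case of the triangle inequality, so the substance of the argument matches. The one genuine, though easily repaired, defect is the final accounting. You set $L=\lfloor x\rfloor$ with $x=\frac{(n/k)^{1/m}-k}{k-1}$ and conclude $\BO(k,\Z/n\Z)>|A|\ge L^{m-2}/m$, but the theorem claims the bound $x^{m-2}/m$, and $L^{m-2}/m$ can be strictly smaller: for instance $x=1.9$, $m=5$ gives $L^{3}/5=0.2$ versus $x^{3}/5\approx 1.37$, so the chain as written does not ``yield the theorem'' after optimizing over $m$. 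The repair costs nothing: your own pigeonhole gives $|S|\ge (L+1)^m/(mL^2+1)\ge (L+1)^{m-2}/m\ge x^{m-2}/m$, using $L+1\ge x$ and $mL^2+1\le m(L+1)^2$, i.e., you should carry the number of digit values $L+1$ through to the end instead of rounding down to $L$. This is exactly how the paper sidesteps the issue: its parameter $d$ (the analogue of your $L+1$) is defined by the two-sided condition $((k-1)(d-1)+1)^m\le\lfloor n/k\rfloor\le((k-1)(d-1)+k)^m-1$, which forces $d\ge x$ directly, so its final bound $d^{m-2}/m$ already dominates the stated one. Your restriction to $m\ge 2$ is harmless and in fact matches what the paper's proof establishes, since for $m=1$ the exponent $m-2$ is negative and the comparison between the digit parameter and $x$ would reverse.
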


The following corollary is of special interest since it shows that Behrend's approach
is valid as long as $\log k / \log n$ tends towards $0$ when $n$ goes to infinity.

\begin{corollary}
\label{Behrendkfixed}
When $n$ tends to infinity, if $k \geq 3$ satisfies $\log k = o ( \log n )$, then
\[
\BO(k, \Z/ n \Z) \geq  n \exp (- 5  \sqrt{ \log k \log n}).
\]
\end{corollary}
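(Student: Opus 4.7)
The plan is to apply Theorem~\ref{theoBehrend} with an essentially optimal choice of the free parameter $m$. Write $L = \log n$ and $\ell = \log k$, so that the hypothesis is $\ell = o(L)$. A quick calculus heuristic (take the logarithm of the right-hand side of the bound and set its derivative in $m$ to zero) suggests choosing $m$ of order $\sqrt{2L/\ell}$; we set $m = \lceil \sqrt{2L/\ell} \rceil$.

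The core computation is to estimate the logarithm of the chosen summand. Under the hypothesis, for $n$ large enough one has $(n/k)^{1/m} = \exp((L-\ell)/m)$ tending to infinity much faster than $k$ (since $(L-\ell)/m \sim \sqrt{L\ell/2} \gg \ell$), so the approximation $(n/k)^{1/m} - k = (1+o(1))(n/k)^{1/m}$ is valid. Similarly $\log(k-1) = \ell + O(1/k)$. Combining these, one obtains
\begin{equation*}
\log\left[\frac{1}{m}\left(\frac{(n/k)^{1/m}-k}{k-1}\right)^{m-2}\right] = L + \ell - \frac{2(L-\ell)}{m} - m\ell - \log m + o\bigl(\sqrt{L\ell}\,\bigr).
\end{equation*}
Plugging in our choice of $m$, both $2(L-\ell)/m$ and $m\ell$ equal $(1+o(1))\sqrt{2L\ell}$, so the total negative contribution from these two dominant terms is $(2\sqrt{2}+o(1))\sqrt{L\ell}$.

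It then remains to compare $2\sqrt{2} \approx 2.83$ with the target constant $5$. Since $5 - 2\sqrt{2} > 2$, the surplus is more than sufficient to absorb all remaining lower-order terms: one has $\log m = O(\log L) = o(\sqrt{L\ell})$ because $\ell \geq \log 3$, and similarly $\ell = o(\sqrt{L\ell})$. Hence the log of the bound provided by Theorem~\ref{theoBehrend} exceeds $L - 5\sqrt{L\ell}$ for all sufficiently large $n$, yielding the stated inequality.

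The main difficulty is purely technical bookkeeping: one has to verify that each approximation (the $-k$ in the numerator, the $k-1$ versus $k$ in the denominator, the integer rounding of $m$, and the $-\log m$ term) contributes only $o(\sqrt{L\ell})$ after taking logarithms. None of these cause any genuine obstruction under $\log k = o(\log n)$; they simply require a careful accounting, with the hypothesis being exactly the condition needed to guarantee that $m \to \infty$ while $(n/k)^{1/m}/k \to \infty$.
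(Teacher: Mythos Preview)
Your proposal is correct and follows essentially the same approach as the paper: both choose $m$ near the calculus-optimal value $\sqrt{2L/\ell}$ and then estimate the resulting term from Theorem~\ref{theoBehrend}. The paper uses $m_0=\lfloor\sqrt{2\log(n/k)/\log(k-1)}\rfloor$ and organizes the estimation via the auxiliary quantities $x_n=m_0 k(k/n)^{1/m_0}\to 0$ and $y_n=(1-k(k/n)^{1/m_0})^{m_0-2}\to 1$, whereas you take logarithms directly and collect the error terms as $o(\sqrt{L\ell})$; these are just two packagings of the same computation.
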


\section{Proofs}

We start with a direct constructive lower bound, needed in some of our arguments.
Indeed as we will see this bound is actually sharp in not too few cases.

\begin{lemma}
\label{lbk+1}
Let $n\ge 6$ and $k$ be two coprime integers such that $3 \le k \le n-3$. Then $\BO(k,\Zn)\ge k+1$.
\end{lemma}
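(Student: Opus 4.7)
The plan is to exhibit a single $k$-element subset $A$ of $\Zn$ that is not $k$-barycentric; this immediately yields $\BO(k,\Zn) \ge k+1$. The reformulation I will use is that, since $\gcd(k,n)=1$, the integer $k$ is invertible in $\Zn$, so a $k$-set $A$ is $k$-barycentric if and only if $k^{-1}\sigma(A) \in A$. In particular, if $\sigma(A)=0$ then $A$ is barycentric if and only if $0 \in A$. Hence it suffices to produce a $k$-element subset of $\Zn \setminus \{0\}$ whose elements sum to zero.

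For $k$ even (so $n$ is odd by coprimality), I take the symmetric set
\[ A = \{\pm 1, \pm 2, \ldots, \pm k/2\}. \]
Its elements pair up to sum to zero, and the combination of $n$ odd (so $j \neq -j$ for $1 \le j \le k/2$) with $j+j' \le k \le n-3 < n$ for distinct $j,j' \in \{1,\ldots,k/2\}$ ensures all $k$ elements are distinct and nonzero in $\Zn$.

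For $k$ odd, the analogous symmetric set would be forced to contain $0$, so I perturb and take
\[ A = \{1,2,-3\} \cup \{\pm 4, \pm 5, \ldots, \pm (k+3)/2\}, \]
where the second set is empty when $k=3$. One has $1+2-3=0$ and each remaining pair contributes $0$, while the cardinality is $3+(k-3)=k$. The main obstacle here is distinctness: the largest index appearing is $(k+3)/2$, and preventing collisions between positive and negative elements requires essentially $k+3<n$, i.e., $k \le n-4$.

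The boundary case $k=n-3$ with $k$ odd (which forces $n$ even) thus needs a separate construction. I will use the complement $A = \Zn \setminus \{0, 1, n/2-1\}$: since $n \ge 6$, the three removed elements are distinct, so $|A|=n-3=k$ and $0 \notin A$; by Lemma \ref{sigmaG} one has $\sigma(\Zn)=n/2$, whence $\sigma(A) = n/2 - (0+1+(n/2-1)) = 0$ as required.
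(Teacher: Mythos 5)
Your proof is correct and is essentially the paper's approach: exhibit one explicit $k$-element subset that is not barycentric, split on the parity of $k$, and use that $k$ is invertible modulo $n$ to reduce the check to $k^{-1}\sigma(A)\notin A$. The only difference lies in the choice of sets: the paper takes the interval $\{0,\dots,k-1\}$ (resp.\ a near-interval for odd $k$), which covers all $k\le n-3$ uniformly, whereas your symmetric zero-sum sets force the extra boundary case $k=n-3$ with $k$ odd, which you handle correctly via the complement construction and Lemma~\ref{sigmaG}.
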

\begin{proof}
It suffices to give an example of a subset of $\Zn$ of cardinality $k$ that is not barycentric.
We distinguish two cases based on the parity of $k$.
Suppose $k$ is even. We consider the set $A=\{0, \dots, k-1\}$.
Then, $\sigma(A)= k(k-1)/2$. Multiplication by $k$ being an isomorphism, it suffices to decide whether $(k-1)/2$ is in $A$
(note that we know that $n$ is odd). This is not the case as the smallest nonnegative representative of $(k-1)/2$ is $(k-1 + n)/2$, which is greater than $k-1$.

Suppose $k$ is odd. For $k=3$, the set $\{0,1,3\}$ is not barycentric and we assume $k\ge 5$.
We consider the set
\[
A= \left\{ 0, \cdots, \frac{k -1}{2} , \frac{k + 5}{2} , \cdots , k, k + 2 \right\}.
\]
We get $\sigma(A)= k(k+1)/2$, and as above, since $(k+1)/2 \notin A$ the claim follows.
\end{proof}

\subsection{Proof of Theorem \ref{thk+1}}

We actually prove the technical Proposition \ref{proptech}.
By Lemma \ref{lbk+1} it remains to establish the upper bound. Let $A$ be any subset of $\Zn$ with  $k +1$ elements and let $x$ be the sum of all elements of $A$. We have $\Sigma_k (A) = x - A$, as in each sum of $k$ distinct
elements of $A$, exactly one element is missing. It follows that $| \Sigma_k  (A) | =| A | = |k \cdot A |=k+1$, where we use that $k$ and $n$ are coprime.

The intersection of $\Sigma_k (A) $ and $k \cdot A$ contains at least  $2 ( k +1) - n \geq 3$ elements, where we used the lower bound on $k$.
Let $b$ be an element in the intersection different from the element $( k + 1)^{-1} kx$; since $k+1$ and $n$ are coprime the latter is a unique well-defined element and  the intersection contains more than one element.
By definition, there exist $a,a' \in A$ such that $b = ka$ and $b= x - a'$ . Since $b \neq ( k + 1)^{-1} kx$, we have $a\neq a'$.
This implies that  $ka \in k \cdot (A  \setminus \{ a'\} )$ is equal to the sum of the elements
of the set with $k$ elements $A \setminus \{ a'\}$, which is therefore a $k$-barycentric set. This establishes the upper bound.

\subsection{Proof of Theorem \ref{th8}}

In part the general strategy of the proof we give is similar to parts of the proof of Theorem \ref{thk+1}. The latter part, establishing the upper bound $k+2$ is essentially \cite[Theorem 8]{chabela}; it is possible to replace this part by invoking the more general result Theorem \ref{UBpol}, obtained by quite different means, yet we avoid to do this to illustrate the merits of the different methods. Indeed, we did obtain an argument along the lines of \cite[Theorem 8]{chabela} for establishing the upper bound of $k+2$ for $k=(p-3)/2$ as well, yet the details become quite tedious so that we feel that to push this method too far further is infeasible, which necessitates an other approach, whence we subsequently establish Theorem \ref{UBpol}.
In the present case only parts of the argument would, under similar technical conditions as in Theorem \ref{thk+1}, carry over to cyclic groups of composite order, so that we do not make this explicit.

Let $A \subset \Zp$ with $|A|=k+1$. We investigate under which conditions $A$ has a $k$-barycentric subset.

We may assume that the sum of the elements of $A$
is $0$; to see this recall that the problem is invariant under translation. We note that $\Sigma_k (A)= -A$. As above, an element of $-A\cap (k \cdot A)$ yields a $k$-barycentric
subset except if it is $0$. Thus, it follows that if $A$ has no $k$-barycentric subset, then
$-A\setminus \{0\}$ and $k \cdot A \setminus \{0\}$ do not intersect. Since the sum of their cardinalities is
$2|A| -2=2k=p-1$, it follows that $( \Zp ) \setminus \{0\}$ is the disjoint union of $-A\setminus \{0\}$ and
$k \cdot A \setminus \{0\}$. Moreover, note that the converse is true as well (for $A$  having the sum of its
elements equal to $0$).
Multiplying by $-2$ it follows that
$( \Zp ) \setminus \{0\}$ is the disjoint union of
$A\setminus \{0\}$ and $2 \cdot A\setminus \{0\}$.

Let $g \in A\setminus \{0\}$. Then $2g \in 2 \cdot A\setminus \{0\}$.
Thus $2g \notin  A\setminus \{0\}$ and so $4g \notin 2 \cdot A\setminus \{0\}$.
Hence $4g \in A\setminus \{0\}$.
Repeating this argument, we see that $2^ig \in A\setminus \{0\}$ if and only if $i$ is even.

Let $\ell$ denote the multiplicative order of $2$ modulo $p$.
By the above reasoning it follows that if $\ell$ is odd, then $2^{\ell}g \notin  A\setminus \{0\}$.
Yet, $2^{\ell}g = g \in A \setminus \{0\}$, a contradiction.
Thus, if $\ell$ is odd then $A$ contains a $k$-barycentric subset and hence
in this case $\mathsf{BO}(k,\Zp)\le k+1$. Since by Lemma \ref{lbk+1} we know $\mathsf{BO}(k,\Zp) \ge k + 1$ our claim follows in the case of odd $\ell$.

From now on, we assume that $\ell$ is even.
We now use the multiplicative structure of $\Zp$ as well;
of course this does not make the argument inapplicable for an abstract cyclic group of order $p$, one merely would have to define (non-canonically) some multiplicative structure.

Let $H$ be the multiplicative group generated by $2$ and
$\{h_1, \dots, h_m\}$ a set of representatives of the classes---with respect to the multiplicative structure---$((\Zp ) \setminus \{0\})/H$. The set
\[
B=  \bigcup_i \{ 2^{2j}h_i \colon 0 \le j \le \ell/2 - 1 \}
\]
has $k$ elements.
And  $( \Zp )\setminus \{0\}$ is the disjoint union of $B $ and $2\cdot B$; also note that $\sigma(B)=0$.
By the above reasoning it follows that $\{0\} \cup B$ has no $k$-barycentric subset and thus
$\mathsf{BO}(k,\Zp)> k+1$.

It remains to show that $\mathsf{BO}(k,\Zp)\le  k+2$; the argument is essentially \cite[Theorem 8]{chabela}.
Let $C \subset \Zp$ a set with $k+2$ elements.
We may assume that $\sigma(C)=0$.
We note that $\Sigma_k(C)= - \Sigma_2(C) = \Zp$, as $2|C| - 3 =p$.
For each $c \in C$, we thus get $kc= -u-v$ with distinct $u,v\in C$.
This yields a $k$-barycentric sequence except if $c= u$ or $c=v$.
Yet if $kc=-c - v $, then $v = -(k+1)c$.
Thus, either we get a $k$-barycentric set, or for each $c \in C$ it follows that $-(k+1)c \in C$.

It follows that $|C|$, or $|C| - 1$ (in case $0 \in C$), is divisible by the multiplicative order of
$-(k+1)$ modulo $p$, which we denote by $t$ and which is a divisor of $p-1$.

First, suppose $t$ divides $|C|= k+2$. Since $2(k+2) - 4 = p-1$, it follows that $t \mid 4$.
Note that $-(k+1)$ is $-(p+1)/2$ and thus $(-2)^{-1}$ modulo $p$. A contradiction to $p \ge 7$.

Second, suppose $t$ divides $|C|-1= k+1$.
Since $2(k+1) - 2 = p-1$, it follows that $t \mid 2$.  Again, a contradiction to $p\ge 7$.

\subsection{Proof of Theorem \ref{UBpol}}

Since for $p=7$ the result does not yield anything not covered by earlier results, we assume $p\ge 11$.
The lower bound is again a consequence of Lemma \ref{lbk+1}.

We now prove the upper bound. Let $A$ be any subset of $\Z / p\Z$ with cardinality $k+2$.
As before, using a translation, we may assume that $\sigma ( A) = 0$.
We have to show that there is a subset $B$ of $A$ with
cardinality $k$ and an element $b$ in $B$ such that $\sigma (B) = k b$.
Using $| A \setminus B | =2$ and the assumption $\sigma ( A) = 0$,
we observe that what we have to prove is that there are three distinct elements $a,a'$ and $b$ such that
\[
-a -a' =- \sigma (A \setminus B) =\sigma (B) = k b
\]
or, equivalently,
\begin{equation}
\label{aresoudre}
a +a' + k b =0.
\end{equation}
This, in fact, will follow from the fact that if we denote
\[
S_k (A)= \{ a_1+a_2 + k a_3 \colon  a_1, a_2, a_3 \in A \text{ pairwise distinct}\}
\]
the assumption $|A|= k+2 \ge (p+8)/3$ implies $|S_k (A)|=p$, thus $0 \in S$ and there
is at least one solution to \eqref{aresoudre}.

We are now reduced to prove this fact. We will in fact prove slightly more by showing that
the following lemma holds.

\begin{lemma}
Let $p$ be a prime and $k$ be an integer, $3 \leq k \leq p-1$, and $A$ be a subset of $\Z / p\Z$
with cardinality at least $k+2$. We have the following estimates.
\begin{enumerate}
\item[(i)]
For $|A| \leq (p+6)/3$, one has
\[
|S_k (A) | \geq  3|A|-6.
\]
\item[(ii)] For $|A|=(p+7)/3$, one has
\[
|S_k A) | \geq  p-2.
\]
\item[(iii)]
For $|A| \geq (p+8)/3$, one has
\[
|S_k (A)| =p.
\]
\end{enumerate}
\end{lemma}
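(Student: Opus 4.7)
The plan is to apply the Combinatorial Nullstellensatz of Alon, Nathanson, and Ruzsa with the auxiliary polynomial
\[
f(x_1,x_2,x_3) \;=\; (x_1-x_2)(x_1-x_3)(x_2-x_3)\prod_{s \in S}(x_1 + x_2 + k x_3 - s),
\]
where, in each of the three cases, $S$ is an arbitrary superset of $S_k(A)$ of cardinality $C$ equal to the claimed lower bound minus one. Then $f$ has total degree $C+3$ and vanishes identically on $A \times A \times A$: the Vandermonde factor kills triples with two equal coordinates, while the product factor kills the remaining pairwise distinct triples, as their images $a_1+a_2+ka_3$ lie in $S_k(A) \subseteq S$. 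By the Combinatorial Nullstellensatz, it will be enough to exhibit a triple $(t_1,t_2,t_3)$ with $t_i \le |A|-1$ and $t_1+t_2+t_3=C+3$ such that the coefficient of $x_1^{t_1}x_2^{t_2}x_3^{t_3}$ in $f$ is nonzero modulo $p$; this will contradict the identical vanishing on $A^3$.

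Next I would compute this coefficient explicitly. Pairing the six monomials of the Vandermonde factor against the multinomial expansion of $(x_1 + x_2 + k x_3)^C$ (the top-degree part of the product) and simplifying, the coefficient reduces to
\[
[x_1^{t_1}x_2^{t_2}x_3^{t_3}]\,f \;=\; \frac{C!}{t_1!\,t_2!\,t_3!}\,k^{t_3-2}\,(t_1-t_2)\,P(k),\qquad P(k) = t_1 t_2\,k^2 - t_3(t_1+t_2-1)\,k + t_3(t_3-1).
\]
Since $C \le p-1$ and $t_i \le p-1$, Wilson's theorem ensures the multinomial-like prefactor is a unit modulo $p$, and $k^{t_3-2}$ is a unit because $k \not\equiv 0$. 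Thus everything reduces to choosing admissible $(t_1,t_2,t_3)$ with $t_1 \ne t_2$ and $P(k) \not\equiv 0 \pmod p$. For (i), with $C = 3|A|-7$, the choice $(|A|-1,|A|-2,|A|-1)$ is admissible and yields $P(k) = (|A|-1)(|A|-2)(k-1)^2$, nonzero modulo $p$ since $|A| \ge 5$ and $k \ge 3$. For (ii), with $C = p-3$ and $|A|=(p+7)/3$, the analogous choice $(|A|-2,|A|-3,|A|-2)$ yields $P(k) = (|A|-2)(|A|-3)(k-1)^2$, again nonzero.

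Case (iii), with $C = p-1$, is what I expect to be the main obstacle, since no single triple of exponents covers all pairs $(|A|, k)$. My plan there is to split on $m := |A|$. For $m \ge (p+5)/2$ I will take $(m-1,\,p+3-m,\,0)$ (admissible because $p$ odd forces $m-1 \ne p+3-m$), for which $P(k) = (m-1)(p+3-m)\,k^2 \not\equiv 0 \pmod p$. For the tighter range $(p+8)/3 \le m \le (p+3)/2$ I will employ two candidate triples, $(m-1,m-2,p+5-2m)$ and $(m-2,m-3,p+7-2m)$; both are admissible, and a direct reduction modulo $p$ rewrites $P(k)$ as $(m-2)Q_2(k)$ and $(m-3)Q_3(k)$ respectively, with
\[
Q_2(k) = (m-1)k^2 + 2(2m-5)(k+1),\qquad Q_3(k) = (m-2)k^2 + 2(2m-7)(k+1).
\]
The key elimination $Q_2(k)-Q_3(k) = (k+2)^2$ then finishes it: simultaneous vanishing of $Q_2$ and $Q_3$ modulo $p$ would force $k \equiv -2 \pmod p$, but then $Q_2(-2) = 6$, which is nonzero modulo $p$ for $p \ge 5$, a contradiction. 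Hence at least one of the two triples delivers a nonzero coefficient, and the Combinatorial Nullstellensatz supplies the desired contradiction. The only delicate point in the whole argument is this final elimination; everything else is routine multinomial bookkeeping together with elementary modular arithmetic.
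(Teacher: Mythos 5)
Your proposal is correct, and at its core it is the same polynomial-method argument as the paper's: the paper invokes the Alon--Nathanson--Ruzsa restricted-sum theorem with the polynomial $(X_1-X_2)(X_3-kX_1)(X_3-kX_2)$ and sets $A_1,A_2\subset A$, $A_3\subset k\cdot A$, which after the substitution $X_3=kx_3$ gives exactly your coefficient identity $\frac{C!}{t_1!t_2!t_3!}k^{t_3-2}(t_1-t_2)\bigl(t_1t_2k^2-t_3(t_1+t_2-1)k+t_3(t_3-1)\bigr)$, and your exponent choices in (i) and (ii) (two equal exponents $\alpha$ and one $\alpha-1$, yielding a factor $(k-1)^2$) coincide with the paper's choices $c_1=c_3=\alpha$, $c_2=\alpha-1$. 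The genuine difference is in case (iii): the paper fixes $c_1=a$ with $a=\lceil (p+5)/3\rceil$, sets $c_3=p+2-2a+x$, and lets $x$ range over $\{1,2,3\}$, arguing that the resulting expression is a degree-two polynomial in $x$ with leading coefficient $k+1$ and hence cannot vanish at all three values; you instead split on $m=|A|$, use the triple $(m-1,p+3-m,0)$ for large $m$, and for $(p+8)/3\le m\le(p+3)/2$ play two explicit triples against each other via the elimination $Q_2(k)-Q_3(k)=(k+2)^2$ together with $Q_2(-2)=6$. Your variant is a perfectly valid alternative and has the small advantage of not relying on $k+1\not\equiv 0\pmod p$ (the paper's quadratic-in-$x$ argument is only safe for $k\le p-2$, which suffices for their application but not literally for the range $k\le p-1$ in the statement), at the cost of a case split on $|A|$; the paper's choice is slightly slicker in that a single family of exponents covers all admissible $|A|$ at once. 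One cosmetic point: when $t_3=0$ the factor $k^{t_3-2}$ in your formula should be read together with $P(k)=t_1t_2k^2$, so no negative powers actually occur; a one-line remark to that effect would make the large-$m$ branch airtight.
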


\begin{proof}
The assertion being trivial otherwise, we may assume $|A|\ge 2$.
We use the polynomial method \cite{ANR} and change slightly our notation to ease
the application of the main result of \cite{ANR}. Let
\[
A_1 \subset A,\ A_2 \subset A,\ A_3 \subset k \cdot A
\]
be non-empty sets and write $c_1 = |A_1 | -1$,  $c_2 = |A_2 | -1$
and  $c_3 = |A_3 | -1$. Since $| k \cdot A | = |A|$, the three $c_i$'s
can take any value between $0$ and $| A | -1$.

We set
\[
T = \bigl\{ u_1 + u_2 + u_3 \colon u_1 \neq u_2, \, u_3 \notin \{ ku_1,
ku_2\}, \text{ and } u_i \in A_i \bigr\}
\]
and notice that this definition guarantees that $T \subset S$.

Let $H$ be the polynomial of degree $3$
\[
H (X_1, X_2, X_3) = (X_1 - X_2) (X_3 - kX_1)(X_3 - kX_2).
\]
We put
\[
m = c_1+c_2+c_3 - \deg( H) = c_1+c_2+c_3 - 3.
\]
Since $T$ can be rewritten as
\[
T = \{ u_1 + u_2 + u_3 \text{ with } u_i \in A_i,  H (u_1,u_2, u_3)
\neq 0  \},
\]
the polynomial method (see the main theorem from \cite{ANR}) gives
\[
|T| \geq m+1
\]
as soon as we can prove that the coefficient $\chi$ of $X_1^{c_1} X_2^{c_2} X_3^{c_3}$ in
the polynomial
\[
(X_1+X_2+X_3)^m H(X_1, X_2, X_3)
\]
is different from $0$. We are therefore reduced to computing this coefficient.

We compute first that
\[
H(X_1, X_2, X_3) = k^2 X_1X_2 (X_1-X_2) + kX_3(X_2^2 - X_1^2)+X_3^2 (X_1-X_2),
\]
and then see that $\chi$ is the sum of six terms (each being a trinomial coefficient):
\begin{eqnarray*}
\chi & = & k^2 \left( \binomial{\hspace{0.25in}m}{c_1-2 \hspace{0.15in} c_2 -1 \hspace{0.15in} c_3}
- \binomial{\hspace{0.25in}m}{c_1-1 \hspace{0.15in} c_2 -2 \hspace{0.15in} c_3} \right) \\
 && +k \left( \binomial{m\hspace{0.2in}}{c_1 \hspace{0.15in} c_2 -2 \hspace{0.15in} c_3 - 1}
- \binomial{m}{c_1-2 \hspace{0.15in} c_2  \hspace{0.15in} c_3 -1} \right) \\
 && + \left( \binomial{m}{c_1 -1 \hspace{0.15in} c_2 \hspace{0.15in} c_3- 2}
- \binomial{m\hspace{0.2in}}{c_1 \hspace{0.15in} c_2 -1  \hspace{0.15in} c_3 -2} \right),
\end{eqnarray*}
which, after simplification, gives
\begin{equation}
\label{chi}
\chi = \frac{m! (c_1 - c_2)}{c_1! c_2! c_3!}
\left( k^2 c_1 c_2 -k c_3 (c_1 +c_2 -1) + c_3 (c_3 -1) \right).
\end{equation}

We first treat cases (i) and (ii) together and observe that in the case $c_1=c_3=\alpha$ and $c_2=\alpha-1$, where $\alpha$ is a given integer
such that $ 2 \leq \alpha \leq | A |-1 < p$,  we get
\[
\chi=\frac{(3\alpha -4)!}{\alpha!^2\ (\alpha-1)!} \alpha(\alpha-1)(k-1)^2 = \frac{(3\alpha -4)!}{\alpha!  (\alpha-1)! (\alpha-2)!} (k-1)^2.
\]
Since $k \neq 1$, this is non-zero if  $3\alpha-4<p$.

For assertion (i) in the lemma, we thus can choose
$\alpha=|A|-1$ in the preceding computation. This leads to the value $m=3|A|-7$, implying the claim.
For (ii), we choose $\alpha=|A|-2$, also implying the claim.

It remains to consider assertion (iii).
To simplify notations, we write $c_1 =a$  and $c_2= a-x$ with some integer $1\le x\le a$ (implying $c_1 > c_2$).
Finally, we put
\[
c_3 = p+2 -2a+x ;
\]
this choice guarantees that $m=p-1$, yet of course we need to choose $a$ and $x$ in such a way that $c_3 \in [0,|A|-1]$.

With this notation, $\chi \neq 0$ if and only if
\[
k^2 a (a-x) -k (p+2-2a+x) (2a-x -1) + (p+2-2a+x) (p+1-2a+x) \neq 0.
\]
We observe that for each $a$, there is an $x$ in $\{1,2,3\}$ such that this is non-zero;
to see this consider the left-hand side as a  polynomial in $x$; it has degree two, the leading coefficient is $k+1$. Thus it has at most two roots.

The only thing that remains to be shown is that there is an integer $0\le a\le |A|-1 $ such that
$p+2 -2a+x \in [0,|A|-1]$. Yet, indeed, for $ \lceil (p+5)/3\rceil$ this is the case.
\end{proof}

\subsection{Proof of Theorem \ref{theoBehrend}}

We shall construct a set of integers in the interval $\{ 0,1,\dots,\lfloor n/k \rfloor \}$ with the property that
it does not contain a $k$-barycentric subset. It will imply the result by a Freiman isomorphism
(we refer to \cite{Nath, TV} for background and the terminology).

Let us choose an arbitrary integer $m$. There is a unique integer $d$
such that
\begin{equation}
\label{ddd}
( (k-1)(d-1) +1)^m \leq \lfloor n/k \rfloor \leq ((k-1)(d-1)+k)^m -1.
\end{equation}
Notice that this implies
\begin{equation}
\label{ddd2}
d \geq 1+ \frac{(\lfloor n/k \rfloor +1)^{1/m} -k}{k-1} \geq
\frac{(n/k)^{1/m} -k}{k-1}.
\end{equation}

Now any integer less than or equal to $( (k-1)(d-1) +1)^m -1 $ can be written in a unique
way in the form
\[
a=\sum_{i=0}^{m-1} a_i((k-1)(d-1)+1)^i,
\]
where the digits $a_i$'s are integers subject to  $0\le a_i\le (k-1)(d-1)$.
We denote $\phi(a)=(a_0,a_1,\dots, a_{m-1})$ the vector of digits of $a$ in
the chosen basis. For an integer $a$, we define its norm by
\[
\|\phi(a)\|=\Big(\sum_{i=0}^{m-1}a_i^2\Big)^{1/2}.
\]

Let $\ac$ denote the set of integers $0 \leq a \leq  ( (k-1)(d-1) +1)^m -1$
whose digits are at most $d-1$, that is
\begin{eqnarray*}
\ac & = &\{ a_0+a_1((k-1)(d-1)+1)+\cdots+a_{m-1}((k-1)(d-1)+1)^{m-1}, \\
&& \hspace{3.5cm}\text{ with } 0\le a_i\le d-1 \text{ for all } 0 \leq i \leq m-1 \}.
\end{eqnarray*}

By Freiman isomorphism, one has for all $x_1,\dots,x_{k-1}\in \ac$,
\[
\phi(x_1+\cdots +x_{k-1})=\phi(x_1)+\cdots +\phi(x_{k-1})\in
\{0,1,\dots,(k-1)(d-1)\}.
\]

Now, if $r$ is an integer then the set
\[
\ac_r = \bigl\{ a \in \ac \colon \|\phi(a)\|
=\sqrt{r} \bigr\}
\]
does not have a  $k$-barycentric subset because if $x_1+\cdots +x_{k-1}= (k-1)x_k$ with
each of the $x_i$'s in $\ac$, then one has
\begin{eqnarray*}
\|\phi(x_1)\|+\cdots
+\|\phi(x_{k-1})\| & =& (k-1)\sqrt{r}\\
&=&(k-1)\|\phi(x_k)\|\\
&=& \|(k-1)\phi(x_{k})\|\\
&=& \|\phi((k-1)x_{k})\|\\
&=&\|\phi(
x_1 + \cdots x_{k-1})\|\\
&=&\|\phi(x_1)+\cdots +\phi(x_{k-1})\|,
\end{eqnarray*}
so that all the vectors of digits in the chosen basis are
proportional and, since they have the same norm and non-negative coordinates, equal.
By the uniqueness of the writing in the chosen basis
this implies equality of the $x_i$'s and proves the statement.

Now, by the pigeonhole principle, at least one of the $\ac_r$'s is
big. Indeed the cardinality of $\ac$ is equal to $d^m$.
The values taken by
$\|\phi(a)\|^2$, for $a\in \ac\setminus\{0\}$, are integral and at most $m(d-1)^2$. Therefore there exists some $r$ such that
\[
|\ac_r|\ge\frac{d^m-1}{m(d-1)^2}>\frac{d^{m-2}}{m}.
\]

Thus, corresponding to this value of $r$, we have a set without a
$k$-barycentric subset with at least the following number of elements:
\[
\frac{d^{m-2}}{m} \geq
\frac{1}{m} \left( \frac{(n/k)^{1/m} - k}{k-1} \right)^{m-2}
\]
where we have used \eqref{ddd2}.

\subsection{Proof of Corollary \ref{Behrendkfixed}}

Let $\varepsilon_n$ denote the real number such that $k=n^{\varepsilon_n}$.
We notice that $k \ge 3$ implies
\begin{equation}
\label{epsilon}
\varepsilon_n > \frac{1}{\log n},
\end{equation}
and $\log k =o (\log n)$ yields that
\[
\lim_{n \rightarrow + \infty } \varepsilon_n = 0 .
\]

We define
\[
m_0=\left \lfloor \sqrt{ 2 \frac{\log (n/k)}{\log (k-1)}} \right \rfloor
\]
and observe that
\[
m_0 \geq \left\lfloor \sqrt{ 2 \frac{\log (n/k)}{\log k}} \right\rfloor =
\left\lfloor \sqrt{ 2 \frac{1-\varepsilon_n}{\varepsilon_n}} \right\rfloor
 \sim \sqrt{\frac{2}{\varepsilon_n}} \hspace{.5cm} (n \rightarrow + \infty).
\]
Thus $m_0$ tends to infinity as $n$ tends to infinity. Therefore, for $n$ large enough, we have
\begin{equation}
\label{a1}
m_0 \geq \sqrt{ \frac{\log (n/k)}{\log k}} = \sqrt{\frac{1- \varepsilon_n}{\varepsilon_n}}.
\end{equation}

We now prove that the quantity
\[
x_n = m_0\  k \left( \frac{k}{n}\right)^{1/m_0}
\]
tends to 0 when $n$ tends to infinity. Indeed
\[
m_0 \leq \sqrt{ 4 \frac{\log (n/k)}{\log k}}  \leq \frac{2}{\sqrt{\varepsilon_n}} ,
\]
and we get ($n$ large enough)
\begin{eqnarray*}
\log x_n  &\lesssim & \log \left( \frac{2}{\sqrt{\varepsilon_n}} \right) +\varepsilon_n \log n -
		\frac{1}{m_0} (1 - \varepsilon_n) \log n \\
		&\lesssim & \log \left( \frac{2}{\sqrt{\varepsilon_n}} \right) +\varepsilon_n \log n
		- \frac{\sqrt{\varepsilon_n}}{2} (1 -\varepsilon_n ) \log n \\
		&\lesssim & - \frac12 \log \varepsilon_n +\varepsilon_n \log n - \frac{\sqrt{\varepsilon_n }}{3} \log n.
\end{eqnarray*}	
Since $\varepsilon_n$ tends to $0$, it follows
\[
\log x_n \lesssim  - \frac12 \log \varepsilon_n - \frac{\sqrt{\varepsilon_n }}{4} \log n.
\]
Since, by \eqref{epsilon},
\[
| \log \varepsilon_n | < \log \log n
\]
while
\[
\sqrt{\varepsilon_n} \log n \geq \sqrt{\log n},
\]
this second term is dominant and we obtain that
\[
\log x_n \lesssim - \frac14 \sqrt{\varepsilon_n} \log n \lesssim - \frac14 \sqrt{\log n}
\]
and $x_n$ tends to 0 as $n$ goes to infinity which proves our assertion.

We are now ready to prove that
\[
y_n = \left( 1 -  k \left( \frac{k}{n}\right)^{1/m_0} \right)^{m_0 -2}
\]
tends towards $1$ as $n$ tends to infinity. Indeed
\[
\log y_n = (m_0 -2) \log \left( 1 - k \left( \frac{k}{n}\right)^{1/m_0} \right) = (m_0 -2 ) \log \left( 1 - \frac{x_n}{m_0} \right).
\]
Since $m_0$ is at least 1, $x_n /m_0$ itself goes to 0 as $n$ tends to infinity and we obtain
\[
\log y_n = (m_0 -2 ) \log \left( 1 - \frac{x_n}{m_0} \right) \sim m_0 \left( - \frac{x_n}{m_0} \right) = -x_n
\]
which implies that $y_n$ tends towards $1$ when $n$ goes to infinity.

Applying Theorem \ref{theoBehrend}, in which
we plug $m=m_0$, yields
\begin{eqnarray*}
\BO(k, \Z / n \Z)
& \ge & \frac{1}{m_0} \left( \frac{n}{k^{m_0 +1}}\right)^{1-2/m_0}
\left( 1 - \left( \frac{k^{m_0 +1}}{n}\right)^{1/m_0} \right)^{m_0 -2} \\
& = & \frac{1}{m_0} \left( \frac{n}{k^{m_0 +1}}\right)^{1-2/m_0} y_n \\
& \sim &\frac{1}{m_0} \left( \frac{n}{k^{m_0 +1}}\right)^{1-2/m_0} \\
& \geq & \frac{n}{n^{2/m_0} m_0 k^{m_0 }}.
\end{eqnarray*}
Now,
\begin{eqnarray*}
n^{2/m_0} m_0 k^{m_0 } & \leq &
\exp \left(  \frac{2 \log n}{\sqrt{\log (n/k) / \log k}} + \log 2 + \log \log n +2 \sqrt{\log n \log k} \right) \\
& \leq & \exp \bigl( (4+\varepsilon)  \sqrt{\log k \log n} \bigr),
\end{eqnarray*}
for any $\varepsilon >0$, when $n$ is large enough.

It follows
\[
\BO(k, \Z / n \Z) \geq
n \exp   \bigl( - 5 \sqrt{\log k \log n} \bigr).
\]

\section*{Acknowledgements}

The authors thank J.~Wolf for various valuable information, in particular for bringing the reference \cite{alon} to their attention, and the referee for a careful reading and detailed and helpful comments.
\bigskip\bigskip\bigskip

\end{document}